\newtheorem{theorem}{Theorem}
\newtheorem{lemma}{Lemma}[section]
\newtheorem{proposition}{Proposition}[section]
\newtheorem{definition}{Definition}
\newtheorem{corollary}{Corollary}[theorem]
\newtheorem*{example}{Examples}
\newtheorem*{MarkovTh}{Markov's Theorem}
\newcommand{\CC}{\mathds{C}}
\newcommand{\ZZp}{\mathds{Z}_{+}}
\newcommand{\NN}{\mathds{N}}
\newcommand{\HH}{\mathds{H}}
\newcommand{\PP}{\mathds{P}}
\newcommand{\RR}{\mathds{R}}
\newcommand{\ZZ}{\mathds{Z}}
\newcommand{\ch}[1]{\mathbf{C}_h\left(#1 \right)}
\newcommand{\supp}[1]{\mathrm{supp}\left(#1 \right)}
\newcommand{\dgr}[1]{\deg(#1)}
\newcommand{\dsty}{\displaystyle}
\newcommand{\unifn}{\;{\mathop{\rightrightarrows}_{n}}\;}
\newcommand{\limn}{\;{\mathop{\longrightarrow}_{n \to \infty}}\;}
\newcommand{\NC}{\mathbf{M}(0,1)}
\newcommand{\Nzero}{{\mathcal{N}}_{0}}
\newcommand{\Nsign}{{\mathcal{N}}_{1}}
\title{Rational Approximation and  Sobolev-type  Orthogonality}
\author{Abel D\'{\i}az-Gonz\'{a}lez\thanks{Supported by the Research Fellowship Program, Ministerio de Econom\'{\i}a y Competitividad of Spain,  under grant  MTM2015-65888-C4-2-P.}\\Universidad Carlos III de Madrid\\ \emph{abdiazgo@math.uc3m.es} \and
H\'{e}ctor Pijeira-Cabrera\thanks{Research partially supported by  Ministry of Science, Innovation and Universities of Spain, under grant  PGC2018-096504-B-C33} \\ Universidad Carlos III de Madrid \\  \emph{hpijeira@math.uc3m.es}
\and Ignacio P\'{e}rez-Yzquierdo\thanks{Research partially supported by  Fondo Nacional de Innovaci\'{o}n  y Desarrollo Cient\'{\i}fico y Tecnol\'{o}gico (FONDOCYT),  Dominican Republic, under grant   {2015-1D2-164}.} \\ Universidad Aut\'{o}noma de Santo Domingo \\ \emph{igca58@gmail.com}}
\begin{document}
\maketitle

\begin{abstract}
In this paper, we study  the sequence of orthogonal polynomials $\{S_n\}_{n=0}^{\infty}$ with respect to the   Sobolev-type  inner product
 $$\langle f,g \rangle=  \int_{-1}^{1} f(x) g(x) \,d\mu(x) +\sum_{j=1}^{N} \eta_{j} \,f^{(d_j)}(c_{j}) g^{(d_j)}(c_{j}),
$$
where $\mu$  is in the Nevai class $\NC$,  $\eta_j >0$, $N,d_j \in \ZZp$  and $\{c_1,\dots,c_N\}\subset \RR \setminus [-1,1]$.   Under some restriction of order in the discrete part of $\langle\cdot,\cdot \rangle$, we prove that for sufficiently large $n$ the zeros of $S_n$ are real, simple, $n-N$ of them   lie  on $(-1,1)$  and each of the  mass points $c_j$ ``attracts''  one of the remaining $N$ zeros.

The sequences of associated polynomials $\{S_n^{[k]}\}_{n=0}^{\infty}$ are defined for each  $k\in \ZZp$.  We prove an analogous of   Markov's Theorem on rational approximation to a function of certain class of holomorphic functions and  we give an estimate of the ``speed''  of convergence.
\end{abstract}

%


\section{Introduction}\label{Sec-Intro}

Let $\mu$ be a finite positive Borel measure whose support $\supp{\mu} \subset [-1,1]$ contains an infinite set of points,   and  $\{P_{n}\}_{n\geq 0}$ be  the sequence of  monic orthogonal polynomials with respect to $\mu$, defined by the  relations
\begin{equation}\label{Standard-OP}
\langle x^k, P_n\rangle_{\mu} =\int_{-1}^{1} x^k\; P_n(x)\, d\mu(x)= 0, \qquad  k=0,1,\dots,(n-1).
\end{equation}
These polynomials satisfy the  three-term recurrence relation
\begin{align}\label{Standard-3TRR}
P_{n+1}(z)=& (z-b_n)P_{n}(z)-a_n^2  P_{n-1}(z), \quad n \geq  0,\\ \nonumber
& P_{-1}(z)=0 \quad \text{and} \quad P_0(z)=1;
\end{align}
where  $a_0\neq 0$ is an arbitrary constant, $a_n={\|P_{n}\|_{\mu}}/{\|P_{n-1}\|_{\mu}}$ for $n>0$, $b_n={\langle P_n, x\; P_n\rangle_{\mu} }/{\|P_{n}\|^2_{\mu}}$ and  $\|\cdot\|_{\mu}=\sqrt{\langle \cdot, \cdot \rangle_{\mu} }$. Usually, an inner product is called standard if the multiplication operator is symmetric with respect to the inner product, i.e., $\langle x f,g\rangle_{\mu}=\langle  f,x g\rangle_{\mu}$. Clearly, \eqref{Standard-OP} is standard and \eqref{Standard-3TRR} is an immediate consequence of \eqref{Standard-OP} , which turns out to be an essential tool in the theory of  standard orthogonal polynomials.

We say that a measure $\mu$  with  support  $[-1,1]$  is in the Nevai class $\NC$, $\mu \in \NC$, if the corresponding  sequence of orthogonal polynomials  $\{P_{n}\}_{n\geq 0}$ satisfies the recurrence relation \eqref{Standard-3TRR},  when  $\dsty \lim_{n \to \infty}a_n=1/2$ and $\dsty \lim_{n \to \infty}b_n=0$. The condition $\mu^{\prime}>0$ a.e. on $[-1,1]$  is a sufficient condition for $\mu \in \NC$ (c.f. \cite{Rakh77,Rakh82}). The class $\NC$ has been thoroughly studied in \cite{Nev79}, where it is proved that $\mu \in \NC$ is equivalent to
\begin{equation}\label{AympRatioStandard}
\frac{P_{n+1}(z)}{P_{n}(z)} \unifn \frac{\varphi(z)}{2}, \quad K \subset  \Omega=\CC \setminus [-1,1],
\end{equation}
where  $\varphi(z) = z +\sqrt{z^2-1}$   ($\sqrt{z^2-1}>0$ for $z>1$) is the function which maps the complement of $[-1,1]$ onto the exterior of the unit circle. Throughout this paper, we use the notation $\dsty f_n \unifn f; \;K \subset U$ when the sequence of functions $f_n$ converges to $f$ uniformly on every compact subset $K$ of the region $U$.

Let us denote by  $P_n^{[1]}$ the usually called \emph{$n$th polynomial  associated to $P_n$}, defined by the expression
  $$P_n^{[1]}(z)= \int_{-1}^{1} \frac{P_{n+1}(z)-P_{n+1}(x)}{z-x}d\mu(x).$$
Note that $P_n^{[1]}$ is a polynomial of degree $n$  with leading coefficient equal to $\mu([-1,1])$, which satisfies the  three-term recurrence relation
\begin{align}\label{Standard-3TRR-2}
P_{n+1}^{[1]}(z)=& (z-b_{n+1})P_{n}^{[1]}(z)-a_{n+1}^2  P_{n-1}^{[1]}(z), \quad n \geq  0,\\ \nonumber
& P_{-1}^{[1]}(z)=0 \quad \text{and} \quad P_0^{[1]}(z)=\mu([-1,1]).
\end{align}

As it  is known, some particular families of orthogonal polynomials were studied in detail before a general theory existed. One of the starting points of this theory is closely related to  the study of the convergence of certain sequences of rational functions, as  can be seen  in the first  treatises on the subject \cite[Ch. I-\S 4,]{Sho34} and \cite[\S 3,5]{Szg75}. The analysis of the convergence of these sequences   entails essential difficulties. One of the first, and most remarkable, general results in this sense  is the following theorem established by A. A. Markov in 1895.

\begin{MarkovTh}[{\cite[Th. 6.1]{NikSor91}}]\label{MarkovTh} Let $\mu$ be a finite positive Borel measure supported in   $[-1,1]$. Then
$$\frac{P_n^{[1]}(z)}{P_{n+1}(z)} \unifn \hat{\mu}(z), \quad K \subset  \Omega_\infty=\overline{\CC} \setminus [-1,1],
$$
where $\dsty \hat{\mu}(z)=\int_{-1}^{1}\frac{d\mu(x)}{z-x}$  is known as  Markov's function of $\mu$.
\end{MarkovTh}
Note that $\hat{\mu}(z)$ is well defined and holomorphic in $\Omega_{\infty}$ ( $\hat{\mu} \in \HH(\Omega_{\infty})$ for short). Some examples can be seen in
\cite[p. 64]{NikSor91}.  This classical theorem admits several generalizations,  some of which are discussed in  \cite{BerLago01,GonLagRak89,GonRakSor97,Lago85}  and references therein.

We define the \emph{discrete  Sobolev inner product }  through the expression
\begin{align}\label{IP-Sobolev}
 \langle f,g \rangle= & \int_{-1}^{1} f(x) g(x) \,d\mu(x) +\sum_{j=1}^{N} \sum_{i=0}^{d_j  } \eta_{j,i} \,f^{(i)}(c_{j}) g^{(i)}(c_{j});
\end{align}
where $\mu$ is as above, $N\geq 0$,  $\eta_{j,i}\geq 0$,   $\eta_{j,d_j}> 0$, $c_j \in \RR\setminus [-1,1]$,  $d_j  \in \ZZ_+$ and  $f^{(i)}$ denotes the $i$th derivative of a function $f$.

For $n \in \ZZp$ we denote by $S_n$ the monic polynomial of lowest degree  satisfying
\begin{equation}\label{Sobolev-Orth}
\langle x^k,S_n \rangle = 0, \quad  \text{for } \; k=0,1,\dots,n-1.
\end{equation}
It is easy to see that for every  $n\in \ZZp$, there exists a unique polynomial $S_n$ of degree $n$. In fact, the existence of such polynomials  is deduced by solving a homogeneous linear system with $n$ equations and $n+1$ unknowns. Uniqueness follows from the minimality of the degree for the polynomial solution.

We refer the reader to \cite{MaXu15,And01}  for a review of  this type of non-standard  orthogonality. As is well known, most arguments for the standard theory of orthogonal polynomials  fail in the Sobolev  case. As   shown in the next examples,  it is no longer true that the zeros lie on the convex hull of the support of the measures involved in the inner product.
\begin{example}\
\begin{enumerate}
  \item  Set $\dsty  \langle f,g \rangle=  \int_{-1}^{1} f(x) g(x) \,dx+f^{\prime}(3) g^{\prime}(3)+f^{\prime\prime}(2) g^{\prime\prime}(2)$, then
$$
S_5(x)=x^5 + \frac{11282625}{1995289} x^4+\frac{202236410}{1795760}  x^3+\frac{28506900}{1995289}  x^2-\frac{438413755}{41901069} x -\frac{11758825}{1995289},
$$
whose zeros are approximately $\xi_{1}\approx 0.4 $, $\xi_{2}\approx- 0.7$, $\xi_{3}\approx 1.1+2i$, $\xi_{4}\approx  1.1-2i$ and $\xi_{5}\approx 3.8 $. Note that three of them are out of $[-1,1]$ and two are not real numbers.

\item  Set $\dsty  \langle f,g \rangle=  \int_{-1}^{1} f(x) g(x)(1-x) \,dx+f^{\prime}(3) g^{\prime}(3)+f^{\prime\prime}(2) g^{\prime\prime}(2)$, then
  $$
S_5(x)=x^5+\frac{57943145}{27312164}x^4-\frac{242237045}{13656082}x^3-\frac{522277585}{20484123}x^2-\frac{53214815}{40968246}x+\frac{220912645}{52141404},
$$ whose zeros are approximately $\xi_{1}\approx 0.3 $, $\xi_{2}\approx- 0.6$, $\xi_{3}\approx -1.1$, $\xi_{4}\approx  3.9$ and $\xi_{5}\approx -4.7 $. Note that three zeros are out of $[-1,1]$ and two of them,  escape to the opposite side  where  the mass  points  are found.
\end{enumerate}
\end{example}

\begin{definition}\label{Set-SOrdered-General}
Let $\{(r_j,\nu_j)\}_{j=1}^M\!\subset \!\RR\!\times\!\ZZp$ be a finite sequence of $M$ ordered pairs and $A\subset \RR$. We say that $\{(r_j,\nu_j)\}_{j=1}^M $ is \emph{sequentially-ordered with respect to $A$}, if
\begin{enumerate}
\item $0\leq \nu_1\leq \nu_2\leq \cdots \leq \nu_M$.
\item $r_k\notin\ch{A \cup\{r_1,r_2,\dots,r_{k-1}\}}\dsty$ for $k=1,2,\dots,M$; where $\ch{B}$ denotes the convex hull of an arbitrary set $B\subset \CC$.
\end{enumerate}
If $A=\emptyset$, we say that  $\{(r_j,\nu_j)\}_{j=1}^M $ is \emph{sequentially-ordered}  for brevity.

We say that the discrete Sobolev inner product \eqref{IP-Sobolev} is \emph{sequentially-ordered}, if the set of ordered pairs $\{(c_j,i): 1\leq j\leq N, 0\leq i\leq d_j \text{ and } \eta_{j,i}>0 \}$ may be arranged to form a finite sequence of ordered pairs which is sequentially ordered with respect to $(-1,1)$.
\end{definition}

 From the second condition of Definition \ref{Set-SOrdered-General}, the coefficient $\eta_{j,d_j}$ is the only coefficient $\eta_{j,i}$ ($i=0,1,\dots,d_j$) different from zero,  for each $j=1,2,\dots,N$. Hence,  \eqref{IP-Sobolev} takes the form
\begin{align}\label{IP-Sobolev-SO}
\langle f,g \rangle=  \int_{-1}^{1} f(x) g(x) \,d\mu(x) +\sum_{j=1}^{N} \eta_{j,d_j} \,f^{(d_j)}(c_{j}) g^{(d_j)}(c_{j}).
\end{align}

Note that the inner products involved in the previous examples are not sequentially-ordered.  In most of our work, we will restrict our attention to sequentially-ordered discrete Sobolev inner products. The following theorem shows our reasons for this assumption.

\begin{theorem}\label{Th_ZerosSimp}  If \eqref{IP-Sobolev-SO}   is  a sequentially-ordered discrete Sobolev inner product, then  $S_n$ has at least $n-N$ changes of sign on $(-1,1)$.
\end{theorem}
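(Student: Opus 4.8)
The plan is to argue by contradiction. Suppose $S_n$ has at most $n-N-1$ changes of sign on $(-1,1)$, occurring at points $x_1<\cdots<x_m$ with $m\le n-N-1$, and set $\rho(x)=\prod_{i=1}^m(x-x_i)$ (with $\rho\equiv 1$ if $m=0$). Since every sign change of $S_n$ inside $(-1,1)$ is a zero of odd multiplicity, the product $\rho(x)S_n(x)$ keeps a constant sign on $(-1,1)$; because $\mu$ has infinite support, $\int_{-1}^1 \rho\,p\,S_n\,d\mu\neq 0$ for any polynomial $p$ that does not change sign on $(-1,1)$ and is not $\mu$-a.e.\ zero. The goal is to manufacture a test polynomial $q$ with $\deg q\le n-1$ for which $\langle q,S_n\rangle\neq 0$, contradicting \eqref{Sobolev-Orth}.

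The natural candidate is $q=\rho\,r$, where $r$ is a polynomial of degree at most $N$, so that $\deg q\le m+N\le n-1$. I would choose $r$ so as to annihilate the entire discrete part of the inner product, i.e.\ so that $(\rho r)^{(d_j)}(c_j)=0$ for $j=1,\dots,N$. These are $N$ homogeneous linear conditions on the $N+1$ coefficients of $r$, so a nontrivial solution $r$ always exists. For such an $r$ every discrete summand in $\langle \rho r,S_n\rangle$ vanishes, leaving $\langle \rho r,S_n\rangle=\int_{-1}^1\rho(x)\,r(x)\,S_n(x)\,d\mu(x)$. If in addition $r$ does not change sign on $(-1,1)$, this integral is nonzero by the observation above, yielding the desired contradiction. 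Note that this route never requires any control over the signs of the derivatives $S_n^{(d_j)}(c_j)$, which is what makes it attractive.

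Everything therefore reduces to the following point, which is where the sequentially-ordered hypothesis must be used: among the solutions $r$ of the linear system one can choose one whose real zeros all lie outside $(-1,1)$. The tool I would rely on is the elementary fact that a real polynomial $P$ all of whose zeros lie on one side of a point $\xi$ has $P^{(k)}(\xi)$ of one fixed sign for every $k\ge 0$, since its successive derivatives inherit the one-sided location of the zeros. I expect the argument to proceed by induction on $N$, peeling off the mass point that is added last in the sequential order: by Definition \ref{Set-SOrdered-General} that point, say $c_N$, lies outside $\ch{(-1,1)\cup\{c_1,\dots,c_{N-1}\}}$, hence it is extremal, so $\rho(x)\prod_{l<N}(x-c_l)$ has all its zeros strictly on one side of $c_N$ and the sign of its $(d_N-1)$-th derivative at $c_N$ is forced. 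This extremality is exactly what should let one locate a zero of $r$ outside $(-1,1)$ meeting the condition attached to $c_N$, and then invoke the inductive hypothesis for the remaining $N-1$ sequentially-ordered nodes.

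The main obstacle is precisely this last step: showing that the discrete-annihilating multiplier $r$ can be taken sign-definite on $(-1,1)$. The homogeneous system alone only guarantees \emph{some} solution, and a priori its zeros could migrate into $(-1,1)$; the geometric content of sequential ordering (each node extremal relative to $(-1,1)$ and to the previously used nodes, together with the increasing orders $d_1\le\cdots\le d_N$) is what must prevent this. Controlling the interplay between the Hermite-type conditions at the external nodes and the location of the zeros of $r$ — keeping the derivative signs coherent across all $N$ conditions simultaneously — is the delicate part, and I would expect the one-sided sign lemma above, applied repeatedly along the sequential order, to be the engine that closes the induction.
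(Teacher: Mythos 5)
Your overall strategy is the same one the paper uses: assuming fewer than $n-N$ sign changes, build a test polynomial of degree at most $n-1$ that annihilates the discrete part of the inner product and keeps a constant sign relative to $S_n$ on $(-1,1)$, then contradict \eqref{Sobolev-Orth}. The reduction is set up correctly: the degree count $\dgr{\rho r}\le m+N\le n-1$ is right, and a nontrivial $r$ solving the $N$ homogeneous conditions $(\rho r)^{(d_j)}(c_j)=0$ certainly exists. But the decisive claim --- that such an $r$ can be taken without sign changes on $(-1,1)$ --- is exactly where your argument stops being a proof. The ``one-sided sign lemma'' you propose applies to a polynomial already known to have only real zeros lying on one side of the evaluation point; your $r$ is produced as an abstract solution of a linear system, and nothing yet excludes complex zeros or real zeros inside $(-1,1)$, so the lemma cannot be applied to $r$ without assuming what is to be proved. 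The sketched induction does not visibly close either: after ``peeling off'' $c_N$, the remaining conditions are Hermite conditions on the product $\rho(x)(x-z_N)\tilde r(x)$ that couple all coefficients of $\tilde r$ with the new factor, and it is not explained why the reduced problem is again of the same sequentially-ordered type. You have correctly isolated the hard point and the hypothesis that must carry it, but you have not supplied the mechanism.

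The paper supplies that mechanism with two lemmas. Lemma \ref{Lem-CoroRolle} is a Rolle-type counting inequality, $\sum_{i=0}^{m}\Nzero\left(L^{(i)};\Delta_i\right)\le \dgr{L}$, valid when the intervals $\Delta_i$ are separated as in \eqref{Lem-CoroRolle-1} --- which is precisely what sequential ordering guarantees. Lemma \ref{PolMiDeg} shows that the minimal-degree solution $U_{N+\ell}$ of the full interpolation problem (simple zeros at the sign-change points together with $U^{(d_j)}(c_j)=0$) has degree at most $N+\ell$; the counting inequality is then saturated, which forces $U_{N+\ell}$ to have no zeros beyond the prescribed ones, in particular none in $(-1,1)$ other than the sign-change points of $S_n$. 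Your route could be completed in the same way: if $r$ had a sign change in $(-1,1)$, then $q=\rho r$ would carry at least $m+1$ zeros in $\Delta_0$ plus one zero of $q^{(d_j)}$ in each $\Delta_{d_j}$, giving $m+1+N>\dgr{q}$ and contradicting Lemma \ref{Lem-CoroRolle}. Without that lemma, or an equivalent substitute, the central step of your proposal remains unproved. (A cosmetic point: your symbol $\rho$ collides with the polynomial $\rho$ of \eqref{Mod-InnerP}.)
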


 The  previous Theorem  is  still true if  $c_j=-1$ or $c_j=1$, for some $j$. Furthermore, if $N=1$ in \eqref{IP-Sobolev-SO}, from  Theorem \ref{Th_ZerosSimp}  we get that all the zeros of  $S_n$  are real, simple, and at most one of them is outside of $(-1,1)$.

If $n\leq N$,  $S_n$ can have changes of sign on $(-1,1)$ or not. For example, if $\sum_{j=1}^N\eta_{j,0}=0$, for all $n\geq 1$, we have $\langle S_n,1 \rangle=  \langle S_n,1 \rangle_{\mu}=0$,  which yields that $S_n$ has at least one sign change on $(-1,1)$. On the other hand, if $\langle f,g \rangle=  \int_{-1}^{1} f(x) g(x) \,dx +f(6)g(6)$,  then $S_1(z)=z-2$, which is negative on  $(-1,1)$.

 As will be seen  in Lemma \ref{AsymCBounded_Lemma}, for sequentially-ordered discrete Sobolev inner products, the corresponding  orthogonal polynomial $S_n$ with degree  $n$ sufficiently large, has all its zeros  real and  simple, each  sufficiently small neighborhood of  $c_j$ ($j=1,\dots,N$) contains exactly one zero of $S_n$, and   from the Theorem \ref{Th_ZerosSimp} the remaining $n-N$ zeros lie on $(-1,1)$.

Let $\{Q_{n}\}_{n\geq 0}$ be the sequence of monic orthogonal polynomials with respect to the inner product
\begin{align}\label{Mod-InnerP}
  \langle f, g \rangle_{\rho} = & \int_{-1}^{1} f(x)\;g(x)\;d\mu_{\rho}(x), \; \text{ where } \; \rho(z)= \prod_{c_j<-1}\!\!\left( z-c_j\right)^{d_{j}+1} \!\prod_{c_j>1}\!\left(c_j-z\right)^{d_{j}+1} \\ \nonumber
   &\text{ and } \; d\mu_{\rho}(x)=\rho(x) d\mu(x).
\end{align}
Note that $\rho$ is a polynomial of degree $d=N+\sum_{j=1}^{N}\!d_j$ and  positive  on $[-1,1]$.

Now, we associate to the sequence $\{S_n\}_{n=0}^{\infty}$ the next  sequences of polynomials
\begin{equation}
\label{kth-AssPoly}
 S^{[k]}_{n}(z)  = \int_{-1}^{1} \frac{S_{{n+k}}(z)-S_{{n+k}}(x)}{z-x}\; Q_{k-1}(x)  \;d\mu_{\rho}(x),
\end{equation}  for $k\in \NN $  and $n \geq 0$.
Additionally, we adopt the convention   $S^{[0]}_{n}=S_{n}$. We call $\dsty \left\{S^{[k]}_{n}\right\}_{n=0}^{\infty}$ the   sequence of \emph{$k$th polynomials associated to} $\dsty \left\{S_{n}\right\}_{n=0}^{\infty}$.

As far as we know, the only extension of Markov's Theorem  for Sobolev orthogonal polynomials appears in  \cite[Th. 5.5]{MaMePi13}, when  the inner product \eqref{IP-Sobolev} is such that $N=1$, $d_1=1$, $c_1=0$, $\eta_{1,0}=0$,  and $\eta_{1,1}>0$. The main aim of the present paper is to prove the following theorem,   which provides a natural extension of the Markov's Theorem   for  the Sobolev case.

\begin{theorem}[Extended Markov's Theorem] \label{Ext_Markov_Th} Let \eqref{IP-Sobolev-SO}  be a sequentially-ordered discrete Sobolev inner product   with $\mu\in\NC$. Then,
for $k\in \NN$,
\begin{equation}\label{Th.Markov}
  R_{n}^{[k]}=\frac{S^{[k]}_{n}(z)}{S_{n+k}(z)} \unifn \widehat{\mu}_{k}(z)= \int_{-1}^{1} \frac{Q_{k-1}(x)}{z-x} d\mu_{\rho}(x), \quad K \subset  \Omega_{\infty}^*=\Omega_{\infty} \setminus \{c_1,c_2,\dots, c_N\}.
\end{equation}
We call  $\widehat{\mu}_{k}$ the \emph{$k$th Markov-type function  associated with $\mu_{\rho}$}.
\end{theorem}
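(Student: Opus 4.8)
The plan is to turn the ratio $R_n^{[k]}$ into an explicit error term and then control that error through the relation between $S_n$ and the standard orthogonal polynomials $Q_n$ of $\mu_\rho$. Splitting the Cauchy kernel in \eqref{kth-AssPoly} gives $S_n^{[k]}(z)=S_{n+k}(z)\,\widehat\mu_k(z)-\int_{-1}^1\frac{S_{n+k}(x)Q_{k-1}(x)}{z-x}\,d\mu_\rho(x)$, whence
\[
\widehat\mu_k(z)-R_n^{[k]}(z)=\frac{1}{S_{n+k}(z)}\int_{-1}^1\frac{S_{n+k}(x)\,Q_{k-1}(x)}{z-x}\,d\mu_\rho(x).
\]
The decisive structural observation is that, for a sequentially-ordered inner product, $S_{n+k}$ is orthogonal with respect to $\mu_\rho$ to every polynomial of degree $\le n+k-d-1$, where $d=\deg\rho$. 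Indeed, testing the Sobolev orthogonality of $S_{n+k}$ against $q=\rho\,p$ and using that $\rho$ vanishes to order $d_j+1>d_j$ at every $c_j$ annihilates the discrete part, so $\int_{-1}^1 S_{n+k}\,p\,d\mu_\rho=0$ for $\deg p\le n+k-d-1$. Writing $Q_{k-1}(x)=Q_{k-1}(z)-(z-x)\widetilde Q(x,z)$ with $\deg_x\widetilde Q=k-2$ and applying this orthogonality (valid once $n$ is large enough that $k-2\le n+k-d-1$), the error collapses to
\[
\widehat\mu_k(z)-R_n^{[k]}(z)=Q_{k-1}(z)\,\frac{\displaystyle\int_{-1}^1\frac{S_{n+k}(x)}{z-x}\,d\mu_\rho(x)}{S_{n+k}(z)}.
\]

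The same $\mu_\rho$-orthogonality yields the connection formula $S_n=Q_n+\sum_{j=1}^{d}a_{n,n-j}Q_{n-j}$, and the next step is to show that the coefficients are uniformly bounded. I would do this by comparison: since $S_n$ minimizes the Sobolev norm $\|\cdot\|=\sqrt{\langle\cdot,\cdot\rangle}$ among monic polynomials of degree $n$, and the competitor $p=\varepsilon\,\rho\,Q_{n-d}$ (with $\varepsilon=\pm1$ making $p$ monic) has a zero of order $\ge d_j+1$ at each $c_j$ and hence no discrete contribution, one gets $\|S_n\|^2\le\|p\|^2=\int\rho^2Q_{n-d}^2\,d\mu\le M\|Q_{n-d}\|_{\mu_\rho}^2$ with $M=\max_{[-1,1]}\rho$; since $\|S_n\|_{\mu_\rho}\le\sqrt{M}\,\|S_n\|$, this gives $\|S_n\|_{\mu_\rho}\le M\|Q_{n-d}\|_{\mu_\rho}$. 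Combined with $|a_{n,n-j}|\le\|S_n\|_{\mu_\rho}/\|Q_{n-j}\|_{\mu_\rho}$ (Cauchy--Schwarz) and the fact that $\|Q_{n-d}\|_{\mu_\rho}/\|Q_{n-j}\|_{\mu_\rho}$ is bounded (successive norm ratios tend to $1/2$ because $\mu_\rho\in\NC$), this bounds the $a_{n,n-j}$.

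With these tools I would write $\widehat\mu_k-R_n^{[k]}=Q_{k-1}\,h_n/g_n$, where $g_n=S_{n+k}/Q_{n+k}$ and $h_n=\big(\int_{-1}^1 S_{n+k}(x)(z-x)^{-1}d\mu_\rho\big)/Q_{n+k}$. Expanding both through the connection formula and using $\mu_\rho\in\NC$ (so $Q_{n+k-j}/Q_{n+k}\unifn(2/\varphi)^j$ on compacts of $\Omega$) together with Markov's Theorem for $\mu_\rho$ (which gives $\big(\int_{-1}^1 Q_m(x)(z-x)^{-1}d\mu_\rho\big)/Q_m\unifn 0$), the bounded coefficients force $h_n\unifn 0$ and make $\{g_n\}$ locally uniformly bounded, hence a normal family, on $\Omega_\infty^*$. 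I expect the main obstacle to be preventing the denominator from degenerating, i.e.\ proving $\liminf_n\inf_{z\in K}|g_n(z)|>0$ on each compact $K\subset\Omega_\infty^*$. Here I would argue by normality and Hurwitz's Theorem: any locally uniform limit $g$ of a subsequence of $\{g_n\}$ has $g(\infty)=1$, since $S_{n+k}$ and $Q_{n+k}$ are monic of the same degree, so $g\not\equiv0$; and by the zero localization of Lemma \ref{AsymCBounded_Lemma} the only zeros of $S_{n+k}$ off $[-1,1]$ eventually lie in arbitrarily small neighborhoods of $c_1,\dots,c_N$, so the $g_n$ are eventually zero-free on $K$ and Hurwitz forces $g$ to be zero-free there; a routine compactness argument then gives the uniform lower bound. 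Therefore $h_n/g_n\unifn 0$ on the finite part of $\Omega_\infty^*$, and multiplying by the fixed polynomial $Q_{k-1}$ gives \eqref{Th.Markov}; the neighborhood of $\infty$ is handled directly, since there $\int_{-1}^1 S_{n+k}(x)(z-x)^{-1}d\mu_\rho=O(1/z)$ while $|S_{n+k}(z)|\sim|z|^{n+k}$, so the convergence is even faster.
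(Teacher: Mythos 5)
Your argument is correct in substance, and after the shared starting identity (your splitting of the Cauchy kernel is exactly the paper's \eqref{LemmRepIntegral-1}) it follows a genuinely different route. The paper never writes the connection formula $S_n=Q_n+\sum_{j=1}^{d}a_{n,n-j}Q_{n-j}$; instead it proves a Gauss--Jacobi-type quadrature formula (Lemma \ref{Th-GaussJacobiType-QF}), expands $R^{[k]}_{n,1}$ into partial fractions with Christoffel-type coefficients, establishes uniform boundedness of that family by a Chebyshev--Markov--Stieltjes separation argument (the Principal Lemma \ref{Lem-Acota-lambda}), and then converts the $\mathcal{O}\left(z^{-(2(n+1)+k-d-N)}\right)$ interpolation property at infinity into convergence via the maximum modulus principle on the level curves $\ell_{\tau}$ --- which delivers the explicit geometric rate \eqref{Ext_Markov_Th-3} that Corollary \ref{CoroSpeed} needs. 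You instead reduce $Q_{k-1}(x)$ to $Q_{k-1}(z)$ by quasi-orthogonality, write the error as $Q_{k-1}h_n/g_n$ with $g_n=S_{n+k}/Q_{n+k}$, control the connection coefficients through the Sobolev extremal property against the competitor $\pm\rho\,Q_{n-d}$, and reduce the numerator to the classical Markov theorem for $\mu_{\rho}$, handling the denominator by normality and Hurwitz. This is more conceptual and arguably more elementary (your zero-freeness step for $g_n$ could even be replaced by a direct appeal to Lemma \ref{Lago_Lemma}, whose limit function is explicit and nonvanishing on $\Omega_{\infty}^*$), but as written it yields only plain locally uniform convergence: to recover Corollary \ref{CoroSpeed} you would have to propagate rates through the classical Markov theorem and through the lower bound on $|g_n|$, which your compactness argument does not quantify. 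Also, do not leave implicit the claims $\alpha_n\to 1/2$ and $Q_{n+k-j}/Q_{n+k}\unifn(2/\varphi)^{j}$: they amount to $\mu_{\rho}\in\NC$, i.e.\ to the preservation of the Nevai class under multiplication by a polynomial positive on $[-1,1]$, which is true and standard but requires a citation (it is contained in the relative asymptotics of \cite{LopMarVan95} and in \cite{Nev79}), since the theorem's hypothesis is only $\mu\in\NC$.
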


Also, in Corollary \ref{CoroSpeed},  we give   the following estimate for the degree of convergence of the sequence of rational functions $\{R^{[k]}_{n}\}$ to the corresponding Markov-type function $\widehat{\mu}_{k}$.
$$
  \limsup_n \left\|\widehat{\mu}_{k} - R^{[k]}_n\right\|_K^{1/2n}\leq \|\varphi\|_K^{-1}<1,\quad \text{where $\dsty \|f\|_{K}= \sup_{z\in K}|f(z)|$}.
$$
The rest of the paper is organized as follows. The next section  is devoted to the consequences of the quasi-orthogonality of $S_n$ with respect to the measure $\mu$. Sections \ref{Sect-ProofTh1} and \ref{Sect-ProofTh2}  contain the proofs of Theorems \ref{Th_ZerosSimp}  and \ref{Ext_Markov_Th}  respectively, as well as some of their consequences. The Section \ref{Sec-AuxLemmas} deals with the auxiliary results for the proof of the main result (Theorem \ref{Ext_Markov_Th}).

\section{Recurrence relations}\label{Sec-RecRelat}

Unlike the rest of the paper, the inner product \eqref{IP-Sobolev} does not necessarily have to be sequentially-ordered in this section.

If $n >d$, from \eqref{Sobolev-Orth}, we have that $S_n$ satisfies the following quasi-orthogonality relations with respect to $d\mu_{\rho}$
\begin{equation}\label{quasi-orthogonal}
\langle S_n, f \rangle_{\rho}  =\langle S_n, \rho f \rangle_{\mu} =\int_{-1}^{1}S_n(x) \, f(x) \rho \, d\mu(x) = \langle S_n,\rho\;  f \rangle= 0 ,
\end{equation}
for all  $f\in \PP_{n-d-1}$, where $\PP_n$ is  the linear space of polynomials with real coefficients and degree at most $n\in \ZZp$.
Hence, \emph{the polynomial $S_n$ is quasi-orthogonal of order $d$ with respect to $d\mu_{\rho}$} and by this argument we get the next result.

\begin{proposition}\label{localzeros} Let $S_n$ be the $n$-th  orthogonal polynomial with respect to   \eqref{IP-Sobolev} and  $n>d$, then $S_n$ has at least $(n-d)$ changes of sign on $(-1,1)$.
\end{proposition}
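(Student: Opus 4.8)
The plan is to exploit the quasi-orthogonality relation \eqref{quasi-orthogonal} together with the classical sign-change argument for quasi-orthogonal polynomials. Since $\rho$ is positive on $[-1,1]$, the measure $d\mu_{\rho}=\rho\,d\mu$ is a positive Borel measure sharing the same (infinite) support as $\mu$. Relation \eqref{quasi-orthogonal} states that $\int_{-1}^{1} S_n(x) f(x)\,d\mu_{\rho}(x)=0$ for every $f\in\PP_{n-d-1}$, that is, $S_n$ is orthogonal to all polynomials of degree at most $n-d-1$ with respect to $d\mu_{\rho}$. The hypothesis $n>d$ guarantees $n-d-1\geq 0$, so this space is nontrivial and the construction below makes sense.

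First I would argue by contradiction. Suppose $S_n$ changes sign at only $m$ points of $(-1,1)$, say $x_1<x_2<\cdots<x_m$, where $m\leq n-d-1$; these are precisely the zeros of $S_n$ of odd multiplicity lying in the open interval. I then form the polynomial $W(x)=\prod_{i=1}^{m}(x-x_i)$, of degree $m\leq n-d-1$, so that $W\in\PP_{n-d-1}$ (taking $W\equiv 1$ if $m=0$).

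The key observation is that the product $S_nW$ does not change sign on $(-1,1)$: at each $x_i$ both factors reverse sign simultaneously, while any zero of $S_n$ of even multiplicity produces no sign change, so $S_nW$ keeps a constant sign throughout the interval and vanishes only on a finite set. Since $d\mu_{\rho}$ is positive with infinite support, it follows that $\int_{-1}^{1} S_n(x) W(x)\,d\mu_{\rho}(x)\neq 0$. This contradicts the orthogonality $\langle S_n,W\rangle_{\rho}=0$ guaranteed by \eqref{quasi-orthogonal}, because $W\in\PP_{n-d-1}$. Therefore $m\geq n-d$, which is exactly the claimed number of sign changes.

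The argument is essentially routine once \eqref{quasi-orthogonal} is in hand; the only point requiring care is the bookkeeping on multiplicities. One must count genuine sign changes (odd-order zeros lying in the open interval $(-1,1)$) rather than all real zeros of $S_n$, and then verify that $S_nW$ is indeed of constant sign, so that the positivity of $d\mu_{\rho}$ together with its infinite support forces the integral to be nonzero. No further input about the discrete part of \eqref{IP-Sobolev} is needed, which is why this section does not assume the inner product to be sequentially-ordered.
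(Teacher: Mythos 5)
Your proof is correct and is exactly the classical quasi-orthogonality sign-change argument that the paper invokes (the paper states the proposition as an immediate consequence of \eqref{quasi-orthogonal} without writing out the details you supply). Nothing is missing; your care about counting only odd-order zeros in $(-1,1)$ is the right bookkeeping.
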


\begin{proposition}\label{PropoGradoS_n}
 Let $S^{[k]}_{n}$ be the $k$th associated polynomial defined by \eqref{kth-AssPoly}.  Then  $S^{[k]}_{n}$ is a polynomial of degree $n$ and leading coefficient equal to $\|Q_{k-1}\|^2_{\mu_\rho}$.
\end{proposition}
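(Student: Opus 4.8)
The plan is to expand the divided difference appearing in \eqref{kth-AssPoly} as a polynomial in $z$ whose coefficients are explicit polynomials in $x$, integrate term by term against $Q_{k-1}\,d\mu_{\rho}$, and then exploit the orthogonality of $Q_{k-1}$ with respect to $\langle\cdot,\cdot\rangle_{\rho}$. Writing the monic polynomial $S_{n+k}(x)=\sum_{i=0}^{n+k}s_i\,x^i$ with $s_{n+k}=1$ and using $\frac{z^i-x^i}{z-x}=\sum_{\ell=0}^{i-1}z^{\ell}x^{\,i-1-\ell}$, I would first collect powers of $z$ to obtain
\[
\frac{S_{n+k}(z)-S_{n+k}(x)}{z-x}=\sum_{m=0}^{n+k-1}\left(\sum_{i=m+1}^{n+k}s_i\,x^{\,i-1-m}\right)z^{m}.
\]
Hence $S^{[k]}_{n}$ is a polynomial in $z$ of degree at most $n+k-1$ whose coefficient of $z^{m}$ equals $\langle g_m,Q_{k-1}\rangle_{\rho}$, where $g_m(x)=\sum_{i=m+1}^{n+k}s_i\,x^{\,i-1-m}\in\PP_{\,n+k-1-m}$.

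The second step is to bound the degree from above. Since $Q_{k-1}$ is the monic polynomial orthogonal with respect to $\langle\cdot,\cdot\rangle_{\rho}$ to every element of $\PP_{k-2}$, the coefficient $\langle g_m,Q_{k-1}\rangle_{\rho}$ vanishes whenever $\deg g_m=n+k-1-m\le k-2$, that is, whenever $m\ge n+1$. Consequently all coefficients of $z^{n+1},\dots,z^{n+k-1}$ are zero and $\deg S^{[k]}_{n}\le n$. (For $k=1$ there is nothing to annihilate, since then the top index is already $m=n$, and the conclusion is immediate.)

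It remains to identify the coefficient of $z^{n}$. For $m=n$ the polynomial $g_n(x)=\sum_{i=n+1}^{n+k}s_i\,x^{\,i-1-n}$ is monic of degree $k-1$, its leading coefficient being $s_{n+k}=1$. Writing $g_n=Q_{k-1}+r$ with $r\in\PP_{k-2}$, orthogonality gives
\[
\langle g_n,Q_{k-1}\rangle_{\rho}=\langle Q_{k-1},Q_{k-1}\rangle_{\rho}+\langle r,Q_{k-1}\rangle_{\rho}=\|Q_{k-1}\|^2_{\mu_{\rho}},
\]
so the coefficient of $z^n$ in $S^{[k]}_{n}$ is exactly $\|Q_{k-1}\|^2_{\mu_{\rho}}$. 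Finally, since $\mu_{\rho}$ is a positive measure supported on infinitely many points (as $\rho>0$ on $[-1,1]$), one has $\|Q_{k-1}\|^2_{\mu_{\rho}}>0$, whence the degree is precisely $n$, as claimed. I do not expect a genuine obstacle here; the only point requiring care is the bookkeeping of the degree of $g_m$ together with the twofold use of orthogonality — once to kill the coefficients above $z^n$, and once to reduce $\langle g_n,Q_{k-1}\rangle_{\rho}$ to the squared norm.
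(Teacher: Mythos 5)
Your proof is correct and follows essentially the same route as the paper: expand the divided difference $\frac{S_{n+k}(z)-S_{n+k}(x)}{z-x}$ into powers of $z$ and $x$, then use the orthogonality of $Q_{k-1}$ to $\PP_{k-2}$ to kill the coefficients of $z^{n+1},\dots,z^{n+k-1}$ and to identify the coefficient of $z^n$ as $\langle x^{k-1},Q_{k-1}\rangle_{\rho}=\|Q_{k-1}\|^2_{\mu_\rho}$. The only difference is bookkeeping (you collect by powers of $z$ first, the paper sums over the monomials of $S_{n+k}$ first), which does not change the substance of the argument.
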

\begin{proof}
Let $\dsty S_{n+k}(x)=\sum_{i=0}^{n+k}\theta_i \; x^i$ where $\theta_{n+k}=1$, then
\begin{align*}
 S^{[k]}_{n}(z) =&  \int_{-1}^{1} \frac{S_{{n+k}}(z)-S_{{n+k}}(x)}{z-x}\; Q_{k-1}(x)  \;d\mu_{\rho}(x)  = \sum_{i=1}^{n+k}\theta_i\int_{-1}^{1} \frac{z^i-x^i}{z-x}\; Q_{k-1}(x)  \;d\mu_{\rho}(x)\\
=& \sum_{i=1}^{n+k}\theta_i\int_{-1}^{1} \left(\sum_{j=0}^{i-1}z^{i-1-j}x^{j}\right)\; Q_{k-1}(x)  \;d\mu_{\rho}(x) \\
= &\sum_{i=1}^{n+k}\theta_i\sum_{j=0}^{i-1}z^{i-1-j}\left(\int_{-1}^{1} x^{j}\; Q_{k-1}(x)  \;d\mu_{\rho}(x)\right) = \sum_{i=1}^{n+k}\theta_i\sum_{j=k-1}^{i-1}\langle x^{j},Q_{k-1} \rangle_{\rho}  \; z^{i-1-j} \\
=& \sum_{j=k-1}^{n+k-1} \langle x^{j},Q_{k-1} \rangle_{\rho}  \;  z^{n+k-1-j}+ \sum_{i=1}^{n+k-1}\theta_i\sum_{j=k-1}^{i-1}\langle x^{j},Q_{k-1} \rangle_{\rho}  \; z^{i-1-j}
  \\
=&\langle x^{k-1},Q_{k-1} \rangle_{\rho}  \;  z^{n}+ f_{n-1}(z)  =\|Q_{k-1}\|_{\mu_\rho}^2  z^{n}+f_{n-1}(z).
\end{align*}
where  $f_{n-1}$ is a polynomial of degree at most $n-1$.
\end{proof}

In the standard case of orthogonality,  where the polynomials $\{P_n\}$ satisfy the three terms  recurrence relation \eqref{Standard-3TRR},  the sequence of associated polynomials $\{P_n^{[1]}\}$  can be generated by the recurrence relation \eqref{Standard-3TRR-2}. The following proposition is an analogous result for the sequence of associated polynomials $\{S_n^{[k]}\}$.

\begin{proposition}[Recurrence relation] For  $n \geq{2d-1} $, the   sequences $\{ S^{[k]}_n\}_{n=0}^{\infty}$    satisfy the following   $2d+1$ term recurrence relation
\begin{equation}\label{RecRel-AssoPoly}
   \rho(z) S^{[k]}_{n}(z)=  \sum_{j=n-d}^{n+d}\mathfrak{a}_{n+k,j+k}  \, S^{[k]}_{j}(z),\quad \text{where} \quad \mathfrak{a}_{n+k,j+k}=\frac{\langle S_{n+k},\rho S_{j+k}\rangle}{\langle S_{j+k},S_{j+k}\rangle}.\end{equation}
\end{proposition}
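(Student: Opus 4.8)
The plan is to establish the $2d+1$ term recurrence relation \eqref{RecRel-AssoPoly} by first proving the analogous relation for the Sobolev polynomials $\{S_n\}$ themselves, and then transferring it to the associated polynomials $\{S_n^{[k]}\}$ via the integral representation \eqref{kth-AssPoly}. The starting observation is the quasi-orthogonality \eqref{quasi-orthogonal}: for $n>d$, the polynomial $S_n$ is quasi-orthogonal of order $d$ with respect to $d\mu_\rho$, and more to the point, the multiplication-by-$\rho$ operator is symmetric for the full Sobolev inner product, since $\langle \rho f, g\rangle = \langle f, \rho g\rangle$ (both equal $\int \rho f g\, d\mu_\rho$ plus the discrete terms, and $\rho$ being a polynomial commutes through the point evaluations of derivatives). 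Thus $\rho(z) S_n(z)$ is a polynomial of degree $n+d$, and I would expand it in the orthogonal basis $\{S_j\}_{j=0}^{n+d}$ of the Sobolev inner product as
\begin{equation*}
\rho(z) S_n(z) = \sum_{j=0}^{n+d} \mathfrak{a}_{n,j}\, S_j(z), \qquad \mathfrak{a}_{n,j} = \frac{\langle \rho S_n, S_j\rangle}{\langle S_j, S_j\rangle}.
\end{equation*}

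The key step is to show that $\mathfrak{a}_{n,j}=0$ whenever $j < n-d$. By symmetry of multiplication by $\rho$, we have $\langle \rho S_n, S_j\rangle = \langle S_n, \rho S_j\rangle$, and $\rho S_j$ has degree $j+d < n$, so this inner product vanishes by the orthogonality \eqref{Sobolev-Orth} of $S_n$ against all polynomials of degree below $n$ — provided $j+d<n$, i.e. $j<n-d$. This confines the expansion to the band $n-d \le j \le n+d$, yielding the $2d+1$ term recurrence for the $S_n$ themselves (valid once $n-d>d$, i.e. $n>2d$ so that the quasi-orthogonality and symmetry arguments apply throughout the relevant index range; this accounts for the hypothesis $n\geq 2d-1$ after the index shift by $k$).

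Next I would apply the defining integral \eqref{kth-AssPoly} to transfer the recurrence to the associated polynomials. Writing the recurrence for $S_{n+k}$ in the form $\rho(x) S_{n+k}(x) = \sum_{j=n-d}^{n+d} \mathfrak{a}_{n+k,j+k}\, S_{j+k}(x)$ and substituting into the definition of $S_n^{[k]}$, the crucial point is to understand how the operator
\begin{equation*}
S_{m} \longmapsto \int_{-1}^{1} \frac{S_{m}(z)-S_{m}(x)}{z-x}\,Q_{k-1}(x)\,d\mu_\rho(x)
\end{equation*}
interacts with multiplication by $\rho$. Since $\rho$ is a fixed polynomial, I would use the identity $\frac{\rho(z)f(z)-\rho(x)f(x)}{z-x} = \rho(z)\frac{f(z)-f(x)}{z-x} + f(x)\frac{\rho(z)-\rho(x)}{z-x}$ applied with $f=S_{n+k}$. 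The first term, integrated against $Q_{k-1}\,d\mu_\rho$, produces exactly $\rho(z)S_n^{[k]}(z)$; the second term is an integral of $S_{n+k}(x)\cdot(\text{polynomial in }z\text{ and }x)$ against $Q_{k-1}\,d\mu_\rho$, and by the quasi-orthogonality of $S_{n+k}$ it should vanish for $n+k$ large enough (this is where the lower bound on $n$ is consumed). On the right-hand side, each $S_{j+k}$ maps under the same operator to $S_j^{[k]}$, giving the claimed relation.

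The main obstacle I anticipate is the careful bookkeeping of the correction term $f(x)\frac{\rho(z)-\rho(x)}{z-x}$: one must verify that $\frac{\rho(z)-\rho(x)}{z-x}$ is a polynomial of degree $d-1$ in $x$ (with coefficients that are polynomials in $z$), so that integrating $S_{n+k}(x)$ against it times $Q_{k-1}(x)\,d\mu_\rho(x)$ amounts to pairing $S_{n+k}$ with a polynomial of degree $(k-1)+(d-1)=k+d-2$ in the $d\mu_\rho$ inner product. Invoking \eqref{quasi-orthogonal}, this vanishes once $k+d-2 \le (n+k)-d-1$, i.e. $n\ge 2d-1$, which is precisely the stated hypothesis. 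Verifying this degree count and the vanishing — together with confirming that the symmetry $\langle \rho f,g\rangle=\langle f,\rho g\rangle$ genuinely holds for the full Sobolev inner product including the derivative point masses — is the delicate part; the rest is algebraic manipulation paralleling the classical three-term case.
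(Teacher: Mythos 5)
Your proposal is correct and follows essentially the same route as the paper: first obtain the banded expansion $\rho(z)S_m(z)=\sum_{j=m-d}^{m+d}\mathfrak{a}_{m,j}S_j(z)$ from the quasi-orthogonality \eqref{quasi-orthogonal}, then transfer it to $S_n^{[k]}$ via the decomposition $\frac{\rho(z)f(z)-\rho(x)f(x)}{z-x}=\rho(z)\frac{f(z)-f(x)}{z-x}+f(x)\frac{\rho(z)-\rho(x)}{z-x}$, with the correction term (of degree $k+d-2$ in $x$) annihilated by quasi-orthogonality of $S_{n+k}$ exactly under the hypothesis $n\geq 2d-1$. One correction on the point you rightly flag as delicate: the symmetry $\langle \rho f,g\rangle=\langle f,\rho g\rangle$ does hold, but not because $\rho$ ``commutes through'' the derivative evaluations (by Leibniz it does not in general); it holds because $\rho$ has a zero of order $d_j+1$ at each mass point $c_j$, so $(\rho f)^{(d_j)}(c_j)=0$ and the discrete terms vanish identically, which is precisely the content of \eqref{quasi-orthogonal}.
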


\begin{proof} It is straightforward to obtain \eqref{RecRel-AssoPoly} for  $k=0$ as a consequence of  \eqref{quasi-orthogonal}, i.e.,
 \begin{equation}\label{Sob-RecRelation}
 \rho(z) S_{n}(z)= \sum_{j=n-d}^{n+d}\mathfrak{a}_{n.j} \, S_j(z),\quad \text{where } \mathfrak{a}_{n.j}=\frac{\langle S_{n},\rho S_j\rangle}{\langle S_j,S_j\rangle}.
\end{equation}
Hence,  if $k>0$
\begin{align*}
  \frac{\rho(z) S_{n+k}(z) - \rho(x) S_{n+k}(x) }{z-x} \; Q_{k-1}(x)  = &  \sum_{j=n-d}^{n+d}\mathfrak{a}_{n+k.j+k} \,\frac{S_{j+k}(z)-  S_{j+k}(x)}{z-x}\; Q_{k-1}(x),\\
\int_{-1}^{1} \frac{\rho(z) S_{n+k}(z) - \rho(x) S_{n+k}(x) }{z-x} \; Q_{k-1}(x)   d\mu_{\rho}(x)= & \sum_{j=n-d}^{n+d}\mathfrak{a}_{n+k.j+k}  \, S^{[k]}_{j}(z).
\end{align*}
As  $n \geq{2d-1}$, from \eqref{quasi-orthogonal}, we get $\dsty \int_{-1}^{1} S_{n+k}(x) \;\frac{\rho(z)- \rho(x) }{z-x} \; Q_{k-1}(x)   d\mu_{\rho}(x)=0.$ Hence,
\begin{align*}
\rho(z)  S^{[k]}_{n}(z)= & \int_{-1}^{1} \frac{\rho(z)( S_{n+k}(z) - S_{n+k}(x))}{z-x} \; Q_{k-1}(x)   d\mu_{\rho}(x)
\\ & + \int_{-1}^{1} S_{n+k}(x) \;\frac{\rho(z)- \rho(x) }{z-x} \; Q_{k-1}(x)   d\mu_{\rho}(x)\\
   = &\int_{-1}^{1} \frac{\rho(z) S_{n+k}(z) - \rho(x) S_{n+k}(x) }{z-x} \; Q_{k-1}(x)   d\mu_{\rho}(x),
\end{align*}
and we get \eqref{RecRel-AssoPoly}.
\end{proof}

Remember   that    $\{Q_{n}\}_{n\geq 0}$ is the sequence of monic orthogonal polynomials with respect to $d\mu_{\rho}$, which was defined in  \eqref{Mod-InnerP}. As it  is known,  this sequence  satisfies the three-term  recurrence relation
\begin{equation}\label{RR3T-Mod}
Q_{n+1}(z)= (z-\beta_{n}) Q_{n}(z)-\alpha_n^2 \;Q_{n-1}(z),\quad n \geq 0,
\end{equation}
where $Q_{-1}=0$, $Q_{0}=1$, $\|\cdot \|_{\mu_{\rho}}^2=\langle \cdot ,\cdot \rangle_{\rho}$, $\beta_{n}={\langle Q_{n} ,x  Q_{n} \rangle_{\rho}}/{\|Q_{n}\|_{\mu_{\rho}}^2} $,  $\alpha_n= {\|Q_{n}\|_{\mu_{\rho}}}/{\|Q_{n-1}\|_{\mu_{\rho}} }$ and $\dsty \alpha^2_0=\int_{-1}^1d\mu_{\rho}(x)$.

Following  \cite{Wal91}, we define its $k$th  sequence of associated polynomials  $\{Q_n^{[k]}\}$  ($k\in \ZZp$) as
\begin{equation}\label{Q-Asociated}
 Q_n^{[k]}(z)=\int_{-1}^{1} \frac{Q_{n+k}(z)-Q_{n+k}(x)}{z-x} \; Q_{k-1}(x) d\mu_{\rho}(x),
\end{equation}
where $ Q_n^{[0]}= Q_n$. Note that $Q_n^{[k]}$ is a polynomial in $z$ of degree $n$. From \cite[(1.3) and (2.13)]{Wal91}
\begin{equation}\label{Q[k]_3TRR}
{Q}_{n+1}^{[k]}(x)= (x-\beta_{n+k}) Q^{[k]}_{n}(x) - \alpha_{n+k}^2 Q^{[k]}_{n-1}(x).
\end{equation}

The next proposition is analogous to \cite[(2.5)]{Wal91} for the Sobolev case.

\begin{proposition} For   ${n\geq d-1}$, the   sequences  $\{ S^{[k]}_n\}_{n=0}^{\infty}$, for
 $\;k \geq 2$,  hold the following  relation
   \begin{equation} \label{StrRel-AssoPoly}
    S^{[k]}_{n}(z)  =  (z-\beta_{k-2}) S^{[k-1]}_{n+1}(z) - {\alpha^2_{k-2}}    S^{[k-2]}_{n+2}(z).
\end{equation}
\end{proposition}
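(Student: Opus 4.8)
The plan is to establish the three-term structure relation \eqref{StrRel-AssoPoly} by substituting the three-term recurrence \eqref{RR3T-Mod} for the monic polynomials $Q_n$ directly into the integral definition \eqref{kth-AssPoly} of the associated polynomials. The key observation is that the factor $Q_{k-1}$ appearing in the kernel of $S^{[k]}_n$ can be expanded via \eqref{RR3T-Mod} as $Q_{k-1}(x)=(x-\beta_{k-2})Q_{k-2}(x)-\alpha_{k-2}^2 Q_{k-3}(x)$, valid for $k\geq 2$. First I would write
\begin{equation*}
 S^{[k]}_{n}(z)  = \int_{-1}^{1} \frac{S_{{n+k}}(z)-S_{{n+k}}(x)}{z-x}\; \bigl[(x-\beta_{k-2})Q_{k-2}(x)-\alpha_{k-2}^2 Q_{k-3}(x)\bigr]  \;d\mu_{\rho}(x),
\end{equation*}
so that the integral splits into two pieces, one weighted by $(x-\beta_{k-2})Q_{k-2}(x)$ and one by $\alpha_{k-2}^2 Q_{k-3}(x)$. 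The second piece is immediately recognizable as $\alpha_{k-2}^2 S^{[k-2]}_{n+2}(z)$, since the kernel there involves $S_{(n+2)+(k-2)}=S_{n+k}$ and the factor $Q_{(k-2)-1}=Q_{k-3}$, matching \eqref{kth-AssPoly} with index shifts.

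The main work lies in the first piece. The factor $(x-\beta_{k-2})$ sits on the numerator side, but I want it attached to $(z-\beta_{k-2})$ to produce $S^{[k-1]}_{n+1}$. The natural device is to write $x-\beta_{k-2}=(x-z)+(z-\beta_{k-2})$ inside that integral. The term $(x-z)$ cancels the denominator $z-x$ up to sign, leaving a polynomial integrand $-(S_{n+k}(z)-S_{n+k}(x))Q_{k-2}(x)$; upon integration this contributes $-S_{n+k}(z)\langle 1,Q_{k-2}\rangle_\rho + \langle S_{n+k},Q_{k-2}\rangle_\rho$. The remaining term $(z-\beta_{k-2})$ pulls out of the integral and what is left is exactly $(z-\beta_{k-2})S^{[k-1]}_{n+1}(z)$, because the kernel now carries $S_{(n+1)+(k-1)}=S_{n+k}$ weighted by $Q_{(k-1)-1}=Q_{k-2}$.

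Therefore the hard part is showing that the polynomial correction term arising from the $(x-z)$ substitution vanishes, i.e. that
\begin{equation*}
-S_{n+k}(z)\,\langle 1,Q_{k-2}\rangle_\rho + \langle S_{n+k},Q_{k-2}\rangle_\rho = 0.
\end{equation*}
For $k\geq 2$ we have $\deg Q_{k-2}=k-2\geq 0$, so $\langle 1,Q_{k-2}\rangle_\rho=0$ whenever $k\geq 3$ by orthogonality of $Q_{k-2}$ to constants, and for $k=2$ the constant $Q_0=1$ case must be checked separately against the degree-of-convergence bookkeeping. More importantly, $\langle S_{n+k},Q_{k-2}\rangle_\rho=\langle S_{n+k},\rho\,Q_{k-2}\rangle_\mu$, where $\rho Q_{k-2}$ has degree $d+(k-2)$; by the quasi-orthogonality relation \eqref{quasi-orthogonal}, this inner product vanishes provided $\deg(\rho Q_{k-2})\leq (n+k)-d-1$, that is $d+k-2\leq n+k-d-1$, equivalently $n\geq 2d-1$. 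I expect the hypothesis $n\geq d-1$ in the statement to suffice once one accounts carefully for the degree bookkeeping, so the delicate point is verifying that both the constant term and the quasi-orthogonality term genuinely vanish under the stated range of $n$, and confirming the edge cases $k=2,3$; this is the step I would scrutinize most closely, since it is where the precise index ranges and the definition of $\rho$ interact.
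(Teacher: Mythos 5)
Your decomposition is exactly the one the paper uses: substitute the three-term recurrence \eqref{RR3T-Mod} for $Q_{k-1}$ into \eqref{kth-AssPoly}, recognize the $Q_{k-3}$ piece as $\alpha_{k-2}^2 S^{[k-2]}_{n+2}$, and convert the factor $x-\beta_{k-2}$ into $z-\beta_{k-2}$ by splitting off a multiple of $z-x$ that cancels the denominator (the paper writes $x=z-(z-x)$, you write $x-\beta_{k-2}=(x-z)+(z-\beta_{k-2})$; it is the same computation). Two points in your bookkeeping need repair, both of which you flagged but left unresolved. First, the quasi-orthogonality relation \eqref{quasi-orthogonal} states $\langle S_m,f\rangle_\rho=0$ for $f\in\PP_{m-d-1}$, where the degree restriction is on $f$, not on $\rho f$: the degree $d$ of $\rho$ is already absorbed in the index $m-d-1$. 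Applied with $m=n+k$ and $f=Q_{k-2}$, the requirement is $k-2\leq n+k-d-1$, i.e.\ $n\geq d-1$, which is exactly the stated hypothesis; your condition $n\geq 2d-1$ comes from counting $\deg\rho$ twice.

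Second, the case $k=2$ is not merely an ``edge case to confirm'': there your correction term must \emph{not} vanish. For $k=2$ the recurrence reads $Q_1(x)=(x-\beta_0)\cdot 1-\alpha_0^2\,Q_{-1}(x)$ with $Q_{-1}\equiv 0$, so the piece you identified as $\alpha_0^2 S^{[0]}_{n+2}$ via the integral formula is actually zero (the convention $S^{[0]}_{n+2}=S_{n+2}$ is not given by the integral with $Q_{-1}$). The needed third term is supplied precisely by the non-vanishing constant contribution: $-S_{n+2}(z)\langle 1,Q_0\rangle_\rho=-\alpha_0^2 S_{n+2}(z)$, since $\langle 1,Q_0\rangle_\rho=\mu_\rho([-1,1])=\alpha_0^2$, while $\langle S_{n+2},Q_0\rangle_\rho=0$ for $n\geq d-1$ as before. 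With these two corrections your argument closes and coincides with the paper's proof.
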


\begin{proof}  From \eqref{RR3T-Mod}-\eqref{Q-Asociated},
\begin{align}\nonumber
   S^{[k]}_{n}(z)  = &  \int_{-1}^{1} \frac{S_{n+k}(z)-S_{n+k}(x)}{z-x}\; \left((x-\beta_{k-2}) Q_{k-2}(x)- \alpha^2_{k-2} Q_{k-3}(x)\right)  \;d\mu_{\rho}(x) \\
   =& \begin{cases}
 \dsty  \int_{-1}^{1} \frac{S_{n+k}(z)-S_{n+k}(x)}{z-x}  x Q_{k-2}(x)  \;d\mu_{\rho}(x)   - \beta_{k-2} S^{[k-1]}_{n+1}(z) - \alpha^2_{k-2}   S^{[k-2]}_{n+2}(z), &  \text{if } k\geq 3,\\
 \dsty   \int_{-1}^{1} \frac{S_{n+2}(z)-S_{n+2}(x)}{z-x}  x d\mu_{\rho}(x)-\beta_0S^{[1]}_{n+1}(z), &  \text{if } k=2.\\
  \end{cases} \label{StrRel-AssoPoly-1}
  \end{align}
  From orthogonality,
$$
\int_{-1}^{1}  \frac{S_{n+k}(z)-S_{n+k}(x)}{z-x} \;(z-x) \;  Q_{k-2}(x)  \;d\mu_{\rho}(x)
=\begin{cases}0,&\text{ if } k\geq 3,\\
\alpha^2_0  S_{n+2}(z)
,&\text{ if } k=2.
\end{cases}$$
Therefore,
\begin{equation}\label{StrRel-AssoPoly-2}
 \int_{-1}^{1} \frac{S_{n+k}(z)-S_{n+k}(x)}{z-x}x Q_{k-2}(x)d\mu_{\rho}(x) =\begin{cases}  z  S^{[k-1]}_{n+1}(z),& \ \text{if } k\geq 3,\\
z  S^{[1]}_{n+1}(z)-\alpha^2_0S_{n+2}(z), &    \ \text{if } k=2.
 \end{cases}
\end{equation}
Substituting \eqref{StrRel-AssoPoly-2}  into \eqref{StrRel-AssoPoly-1}, we get \eqref{StrRel-AssoPoly}.
\end{proof}

\section{Proof of Theorem \ref{Th_ZerosSimp}}\label{Sect-ProofTh1}

In the remainder of the paper, we assume that \eqref{IP-Sobolev} is sequentially-ordered. Therefore, we can rewrite \eqref{IP-Sobolev}   as \eqref{IP-Sobolev-SO} with $0\leq d_1\leq d_2\leq \cdots\leq d_N$.
The next lemma is an extension of \cite[Lemma 2.1]{LoPiPe01}.

\begin{lemma}\label{Lem-CoroRolle}  Let $L$ be a polynomial with real coefficients  of degree  $\geq m\in \NN$, $\{\Delta_i\}_{i=0}^m$ be a set of intervals on the real line, and $I_k=\ch{\cup_{i=0}^{k} \Delta_i}$ for  $k=0,1,\dots, m$.
If
\begin{equation}\label{Lem-CoroRolle-1}
I_{k-1}\cap \Delta_{k}=\emptyset, \quad \quad k=1,2,\dots, m;
\end{equation}
 then
\begin{equation}\label{Lem-CoroRolle-2}
\sum_{i=0}^m \Nzero\left(L^{(i)};\Delta_i\right) \leq \Nzero\left(L^{(m)};I_m\right)+m \leq \dgr{L},
\end{equation}
where for a given  non-null polynomial $f$ and $A \subset \RR$ the symbol $\Nzero(f;A)$ denotes the total number of zeros (counting multiplicities) of $f$ on $A$.
\end{lemma}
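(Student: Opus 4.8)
The plan is to handle the two inequalities in \eqref{Lem-CoroRolle-2} separately. The right-hand inequality $\Nzero(L^{(m)};I_m)+m\leq \dgr{L}$ is immediate: since $\dgr{L}\geq m$, the derivative $L^{(m)}$ is a nonzero polynomial of degree $\dgr{L}-m$, hence has at most $\dgr{L}-m$ zeros in total and in particular on $I_m$; adding $m$ gives the bound. The substance lies in the left-hand inequality, which I would obtain from the stronger family of estimates
$$\sum_{i=0}^k \Nzero(L^{(i)};\Delta_i)\leq \Nzero(L^{(k)};I_k)+k,\qquad k=0,1,\dots,m,$$
proved by induction on $k$; the case $k=m$ is exactly what is wanted. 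The base case $k=0$ holds with equality, because $\Delta_0$ is already an interval and so $I_0=\ch{\Delta_0}=\Delta_0$.

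For the inductive step I would assume the estimate for $k-1$ and add $\Nzero(L^{(k)};\Delta_k)$ to both sides, reducing the claim to the single two-term inequality
$$\Nzero(L^{(k-1)};I_{k-1})+\Nzero(L^{(k)};\Delta_k)\leq \Nzero(L^{(k)};I_k)+1.$$
To prove this, note first that $\dgr{L}\geq m\geq k$ forces $L^{(k-1)}$ to be a nonzero polynomial of positive degree, so a Rolle-type count on the interval $I_{k-1}$ applies: writing the zeros of $L^{(k-1)}$ in $I_{k-1}$ by their distinct locations and multiplicities, the derivative $L^{(k)}$ inherits a zero of multiplicity one less at each such location and acquires an extra zero strictly between consecutive locations, all lying in the subinterval of $I_{k-1}$ spanned by those zeros. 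This yields
$$\Nzero(L^{(k)};I_{k-1})\geq \Nzero(L^{(k-1)};I_{k-1})-1.$$

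The decisive structural point is that hypothesis \eqref{Lem-CoroRolle-1} makes $I_{k-1}$ and $\Delta_k$ disjoint subsets of $I_k=\ch{I_{k-1}\cup\Delta_k}$, so the zeros of $L^{(k)}$ counted on $I_{k-1}$ and those counted on $\Delta_k$ are genuinely distinct and both accrue to $I_k$; hence $\Nzero(L^{(k)};I_k)\geq \Nzero(L^{(k)};I_{k-1})+\Nzero(L^{(k)};\Delta_k)$. Combining this with the Rolle estimate delivers the two-term inequality and closes the induction. I expect the main obstacle to be purely bookkeeping: stating the Rolle count with multiplicities correctly on the closed interval $I_{k-1}$, and verifying that the disjointness of $I_{k-1}$ and $\Delta_k$ genuinely rules out any double counting of zeros of $L^{(k)}$. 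Once these are pinned down the argument is self-contained, and the monotone chain $I_0\subseteq I_1\subseteq\cdots\subseteq I_m$ together with $I_0=\Delta_0$ makes the inductive geometry transparent.
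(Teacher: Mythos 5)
Your proposal is correct and follows essentially the same route as the paper's own proof: induction on the number of intervals, the Rolle-type count $\Nzero\left(L^{(k-1)};I_{k-1}\right)\leq \Nzero\left(L^{(k)};I_{k-1}\right)+1$ on the interval $I_{k-1}$, and the disjointness $I_{k-1}\cap\Delta_k=\emptyset$ to add the zero counts inside $I_k$. The only cosmetic difference is that you isolate the two-term inequality and the trivial right-hand bound explicitly, whereas the paper runs the whole chain of inequalities in a single display.
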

\begin{proof}  For   $m=0$,  it is straightforward that  $\Nzero(L;\Delta_0) \leq \Nzero(L;\Delta_0)+0\leq \dgr{L}$. We now proceed by induction on $m$.  Suppose that we have  $\kappa+1$ intervals  $\{\Delta_i\}_{i=0}^\kappa$ that satisfy \eqref{Lem-CoroRolle-1}, and that \eqref{Lem-CoroRolle-2} is  true for the first $\kappa-1$ intervals.

From   Rolle's Theorem,  $ \Nzero(f;A)\leq  \Nzero(f^{\prime};A)+1$, where $A$ is an interval of the real line and $f^{\prime}$ a non-null polynomial with real coefficients. Therefore,
\begin{align*}
 \sum_{i=0}^{\kappa} \Nzero\left(L^{(i)};\Delta_i\right)=&\sum_{i=0}^{\kappa-1} \Nzero(L^{(i)};\Delta_i)+\Nzero\left(L^{(\kappa)};\Delta_{\kappa}\right)\\
 \leq & \Nzero\left(L^{(\kappa-1)};I_{\kappa-1}\right)+(\kappa-1)+\Nzero\left(L^{(\kappa)};\Delta_{\kappa}\right)\\
\leq & \Nzero\left(L^{(\kappa)};I_{\kappa-1}\right)+1+\Nzero\left(L^{(\kappa)};\Delta_{\kappa}\right)+(\kappa-1)\\
\leq & \Nzero\left(L^{(\kappa)};I_{\kappa-1}\cup \Delta_{\kappa}\right)+\kappa \leq \Nzero\left(L^{(\kappa)};I_{\kappa}\right)+\kappa \leq \dgr{L}.
\end{align*}\end{proof}

\begin{lemma}\label{PolMiDeg}
Let $\{(r_i,\nu_i)\}_{i=1}^M$ be a sequence of $M$ ordered pairs which is sequentially-ordered. Then, there exists   a   unique monic polynomial   $U_M$ of minimal degree,  such that
\begin{align}\label{cond}
U_M^{(\nu_i)}(r_i)=0 \quad \text{ for }i=1,2,\dots,M.
\end{align}
Furthermore, the degree of $U_M$ is  $\kappa_M=\min \,\mathfrak{I}_{M}-1$, where $\mathfrak{I}_{M}=\{i: 1\leq i\leq M\; \text{ and }\; \nu_i \geq i\}\cup \{M+1\}$.
\end{lemma}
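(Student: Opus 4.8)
\emph{Plan.} Write $i_0=\min\mathfrak I_M$, so the target degree is $\kappa:=\kappa_M=i_0-1$. Two structural features of the sequence drive the whole argument, and I would isolate them first. Since every index $i<i_0$ fails to lie in $\mathfrak I_M$, we have $\nu_i\le i-1$ for $i=1,\dots,\kappa$ (a P\'olya-type condition on the truncated sequence $\{(r_i,\nu_i)\}_{i=1}^{\kappa}$, which is itself sequentially-ordered). On the other hand, because $(\nu_i)$ is non-decreasing and $\nu_{i_0}\ge i_0=\kappa+1$, we get $\nu_i\ge\kappa+1$ for all $i\ge i_0$. I would then argue that the entire statement reduces to a single \emph{poisedness estimate}: every non-zero polynomial $V$ satisfying the first $\kappa$ conditions has $\dgr{V}\ge\kappa$.

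Granting that estimate, existence, minimality and uniqueness follow quickly. Consider the linear map $\PP_{\kappa}\to\RR^{\kappa}$ sending $V\mapsto\big(V^{(\nu_i)}(r_i)\big)_{i=1}^{\kappa}$; its kernel has dimension at least $(\kappa+1)-\kappa=1$, so there is a non-zero $V_*\in\PP_{\kappa}$ annihilating the first $\kappa$ conditions. For $i\ge i_0$ the order $\nu_i\ge\kappa+1>\dgr{V_*}$ forces $V_*^{(\nu_i)}\equiv0$, so $V_*$ in fact satisfies all $M$ conditions of \eqref{cond}. The poisedness estimate gives $\dgr{V_*}\ge\kappa$, whence $\dgr{V_*}=\kappa$; normalising yields a monic $U_M$ of degree $\kappa$ meeting every condition. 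Minimality is the estimate itself, and uniqueness follows because the difference of two monic admissible polynomials of degree $\kappa$ would be an admissible polynomial of degree $<\kappa$, hence zero.

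The core of the proof, and the step I expect to be the main obstacle, is the poisedness estimate for the truncated sequence (where $\nu_i\le i-1$ for all $i$). I would establish it by a Rolle-counting recursion refining Lemma \ref{Lem-CoroRolle}. For $\ell\ge0$ put $I_\ell=\ch{\{r_i:\nu_i\le\ell\}}$, let $c_\ell=\#\{i:\nu_i=\ell\}$, and let $n_\ell$ denote the number of zeros of $V^{(\ell)}$ in $I_\ell$ counted with multiplicity, for a fixed non-zero admissible $V$. The decisive geometric input from Definition \ref{Set-SOrdered-General} is that any $r_i$ with $\nu_i=\ell$ satisfies $r_i\notin\ch{\{r_1,\dots,r_{i-1}\}}\supseteq I_{\ell-1}$, so the $c_\ell$ new zeros of $V^{(\ell)}$ lie strictly outside $I_{\ell-1}$ but inside $I_\ell$, possibly on \emph{either} side of $I_{\ell-1}$ (this is precisely why one interval per order cannot feed Lemma \ref{Lem-CoroRolle} directly, and why the refined count is needed). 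Combining them with the at least $n_{\ell-1}-1$ zeros that $V^{(\ell)}$ inherits inside $I_{\ell-1}$ by Rolle's theorem, and noting the two families are disjoint, gives $n_\ell\ge n_{\ell-1}-1+c_\ell$; telescoping from $n_0\ge c_0$ produces $n_\ell\ge\sum_{j=0}^{\ell}c_j-\ell$.

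To extract $\dgr{V}\ge\kappa$ I would split on $n=\dgr{V}$ and set $L=\nu_\kappa$. If $n\ge L$, evaluating the recursion at $\ell=L$ gives $n-L=\dgr{V^{(L)}}\ge n_L\ge\kappa-L$, so $n\ge\kappa$. If instead $n<L$, evaluating at $\ell=n$ gives $0=\dgr{V^{(n)}}\ge n_n\ge\#\{i:\nu_i\le n\}-n$; but the P\'olya condition $\nu_i\le i-1$ forces every index $i\le n+1$ into $\{i:\nu_i\le n\}$, whence $\#\{i:\nu_i\le n\}\ge n+1$, a contradiction. Thus $n\ge L$ and therefore $n\ge\kappa$, which finishes the estimate. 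Throughout, the delicate points to control are the disjointness of the inherited and new zeros, their containment in $I_\ell$, and the degenerate case $V^{(\ell)}\equiv0$; all three are handled by the two structural facts isolated at the outset, with the opposite-side placement of tied points being exactly where the naive reduction to Lemma \ref{Lem-CoroRolle} fails and the recursion is indispensable.
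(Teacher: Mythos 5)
Your proof is correct, but it takes a genuinely different route from the paper's. The paper argues by induction on $M$: when $\kappa_{M+1}\leq M$ it shows $U_{M+1}\equiv U_M$ because the new condition has order exceeding $\dgr{U_M}$, and when $\kappa_{M+1}=M+1$ it invokes Lemma \ref{Lem-CoroRolle} with the intervals $\Delta_k=\ch{\{r_i:\nu_i=k\}}$. You instead work non-inductively: you peel off the prefix $\{(r_i,\nu_i)\}_{i=1}^{\kappa}$ satisfying the P\'olya condition $\nu_i\leq i-1$, note that any nonzero $V_*\in\PP_{\kappa}$ annihilating those $\kappa$ conditions automatically satisfies the remaining ones (since $\nu_i\geq\kappa+1>\dgr{V_*}$ for $i\geq i_0$), and reduce the whole lemma to the lower bound $\dgr{V}\geq\kappa$, proved by a telescoping Rolle recursion on the nested hulls $I_\ell=\ch{\{r_i:\nu_i\leq\ell\}}$ rather than by citing Lemma \ref{Lem-CoroRolle}. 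This buys something real: your observation that two nodes of the same order may lie on opposite sides of $I_{\ell-1}$, forcing $\Delta_\ell\supseteq I_{\ell-1}$ and violating hypothesis \eqref{Lem-CoroRolle-1}, is accurate --- for instance the sequentially-ordered triple $\{(0,0),(-1,1),(1,1)\}$ gives $\Delta_1=[-1,1]\supset I_0=\{0\}$ --- so the paper's appeal to Lemma \ref{Lem-CoroRolle} in its first case does not apply verbatim, whereas your count of the $c_\ell$ new zeros one point at a time (each lying outside $I_{\ell-1}$ because the $\nu_i$ are non-decreasing, hence $\ch{\{r_1,\dots,r_{i-1}\}}\supseteq I_{\ell-1}$) goes through regardless of which side of $I_{\ell-1}$ they fall on; your argument also delivers the degree formula in a single pass. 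Two small points to make explicit in a write-up: in the subcase $n<L$ the pigeonhole needs $\kappa\geq n+1$, which indeed holds since $L=\nu_{\kappa}\leq\kappa-1$ by the P\'olya condition; and the recursion must only be evaluated at orders $\ell\leq n$, where $V^{(\ell)}\not\equiv 0$, which is exactly where you use it.
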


\begin{proof}
The existence of a  not identically zero polynomial with degree $\leq M$ satisfying \eqref{cond}   reduces to solving a homogeneous linear system of $M$ equations on $M+1$ unknowns (its coefficients). Thus, a non trivial solution always exists. In addition, if we suppose that there exist two different minimal monic polynomials $U_M$ and $\widetilde{U}_M$, then  the polynomial $\widehat{U}_M=U_M-\widetilde{U}_M$ is not identically zero, it satisfies \eqref{cond}, and  $\dgr{\widehat{U}_M}<\dgr{U_M}$. So, if we divide $\widehat{U}_M$ by its leading coefficient, we reach a contradiction.

The rest of the proof runs  by induction on the number of points $M$. For $M=1$, the result follows taking
$$U_1(x)=\begin{cases}x-r_1&,  \text{ if } \nu_1=0,\\
1&, \text{ if } \nu_1\geq 1
.\end{cases}$$

 Suppose that,  for each sequentially-ordered sequence  of $M$ ordered pairs,  the corresponding minimal  polynomial $U_M$ has degree  $\kappa_M$.

 Let $\{(r_i,\nu_i)\}_{i=1}^{M+1}$ be  a sequentially-ordered sequence of $M+1 $ ordered pairs. Obviously,  $\{(r_i,\nu_i)\}_{i=1}^{M}$ is also sequentially-ordered, $\dgr{U_{M+1}}\geq \dgr{U_M}$,  and from the induction hypothesis $\dgr{U_M}=\kappa_M$.  Now, we shall divide the proof in two cases:

\begin{enumerate}
\item If $\kappa_{M+1}=M+1$, then for all $1\leq i\leq M+1$ we have $ \nu_i< i$, which yields
\begin{equation}\label{PolMiDeg-1}
\dgr{U_{M+1}}\geq \dgr{U_M}=\kappa_M=M \geq \nu_{M+1}.
\end{equation}
Let $\dsty \Delta_{k}=\ch{\{c_i:\nu_i=k\}}$ for $k=0,1,2,\dots, \nu_{M+1}$. As $\{(r_i,\nu_i)\}_{i=1}^{M+1}$ is sequentially-ordered, the set of intervals $\{\Delta_{k}\}_{k=0}^{\nu_{M+1}}$ satisfy \eqref{Lem-CoroRolle-1}. Therefore, from \eqref{PolMiDeg-1} and Lemma \ref{Lem-CoroRolle} we get
$$
M+1\leq \sum_{i=0}^{\nu_{M+1}} \Nzero\left(U_{M+1}^{(i)};\Delta_i\right) \leq  \dgr{U_{M+1}},
$$  which implies that  $\dgr{U_{M+1}}=M+1=\kappa_{M+1}$.

\item  If $\kappa_{M+1}\leq M$, then  there exists a minimal $j$ ($1\leq j\leq M+1$),  such that $\nu_j\geq j$,  and $ \nu_i< i$ for all $1\leq i\leq j-1$. Therefore,  $\kappa_{M+1}=j-1=\kappa_M$. From the induction hypothesis
$$\dgr{U_M}=\kappa_M=j-1\leq \nu_j-1\leq \nu_{M+1}-1,$$ which gives  $U^{(\nu_{M+1})}_M\equiv 0$. Hence,  $U_{M+1}\equiv U_{M}$ and $\dgr{U_{M+1}}=\dgr{U_{M}}=\kappa_M=\kappa_{M+1}$.
\end{enumerate}\end{proof}

Observe  that, in Lemma \ref{PolMiDeg},  the assumption of $\{(r_i,\nu_i)\}_{i=1}^M$ being sequentially-ordered is necessary for asserting that the polynomial $U_M$ has degree $\kappa_M$. In fact, if we consider the non sequentially-ordered sequence $\{(-1,0),(1,0),(0,1)\}$, we get $U_3=x^2-1$ and $\kappa_3=3\neq \dgr{U_3}$.

\begin{proof}[Proof of Theorem \ref{Th_ZerosSimp}]\
 From the sequentially-ordered conditions,  the intervals $$\Delta_{0}=\ch{(-1,1)\cup\{c_i:d_i=0\}} \quad, \quad   \Delta_{k}=\ch{\{c_i:d_i=k\}} \quad\text{for }\; k=1,2,\dots, N,$$
satisfy \eqref{Lem-CoroRolle-1}.

Let  $\xi_1<\xi_2<\cdots <\xi_{\ell}$ be the points on $(-1,1)$ where $S_n$ changes sign and suppose that $\ell<n-N$. Let $\{(r_i,\nu_i)\}_{i=1}^{N+\ell}$ be the  sequentially-ordered sequence $$(r_i,\nu_i)=\left\{
                \begin{array}{ll}
                  (\xi_i,0), & \hbox{if } i=1,2,\dots,\ell;\\
                  (c_{i-\ell},d_{i-\ell}), & \hbox{if }  i=\ell+1, \ell+2,\dots,\ell+N.
                \end{array}
              \right.
$$
From Lemma \ref{PolMiDeg}, there exists a unique monic polynomial   $U_{N+\ell}$ of minimal degree,  such that
\begin{align*}
U_{N+\ell}^{(\nu_i)}(r_i)=0;&\qquad\text{for }\; i=1,\dots, N+\ell.
\end{align*}
Furthermore,
\begin{equation}\label{DegQ}
\dgr{U_{N+\ell}}=\min \,\mathfrak{I}_{N+\ell}-1\leq N+\ell,
\end{equation}
where  $\mathfrak{I}_{N+\ell}=\{i: 1\leq i\leq N+\ell\; \text{ and }\; \nu_i \geq i\}\cup \{N+\ell+1\}$.  Now, we need to consider the following two cases.

\begin{enumerate}
\item If $\dgr{U_{N+\ell}}<N+\ell$,  from \eqref{DegQ},  there exists $1\leq j\leq N+\ell$ such that $\dgr{U_{N+\ell}}=j-1$, $\nu_{j}\geq j$ and $\nu_i\leq i-1$ for $i=1,2,\dots,j-1$. Hence, $\nu_{j-1}+1\leq j-1=\dgr{U_{N+\ell}}$. Thus, from Lemma \ref{Lem-CoroRolle},
$$
j-1\leq\sum_{k=0}^{\nu_{j-1}} \Nzero\left(U_{N+\ell}^{(k)};\Delta_k\right) \leq  \dgr{U_{N+\ell}}=j-1,
$$
\item If $\dgr{U_{N+\ell}}=N+\ell$, from \eqref{DegQ},  we get $\dgr{U_{N+\ell}}=N+\ell\geq\nu_{\ell+N}+1=d_N+1$ and from Lemma \ref{Lem-CoroRolle},
$$
N+\ell\leq\sum_{k=0}^{d_N} \Nzero\left(U_{N+\ell}^{(k)};\Delta_k\right) \leq  \dgr{U_{N+\ell}}=N+\ell,
$$

\end{enumerate}
In both cases, we obtain that $U_{N+\ell}$ has  simple zeros on $(-1,1) \subset \Delta_0$ and has no other zeros than those given by construction. Now, since $\dgr{U_{N+\ell}}\leq \ell+N<n$, we arrive at the contradiction
\begin{align*}
 0&=\langle S_n,U_{N+\ell} \rangle=\int_{-1}^{1} S_n(x) U_{N+\ell}(x) \, d\mu(x) +\sum_{j=1}^{N}\eta_{j,d_j} S_n^{(d_j)}(c_{j}) U_{N+\ell}^{(d_j)}(c_{j})\\
 & =\int_{-1}^{1} S_n(x) U_{N+\ell}(x) \,  d\mu(x)\neq0.
\end{align*}
\end{proof}

The following Lemma is  a direct consequence of  \cite[(1.10)]{LopMarVan95}, when  instead of the inner product  \cite[(1.1)]{LopMarVan95}, we consider \eqref{IP-Sobolev-SO}.

\begin{lemma} \label{Lago_Lemma} Consider the sequentially-ordered inner product \eqref{IP-Sobolev-SO}   with $\mu\in\NC$.   Then,
  \begin{equation}\label{AympComp}
\frac{S_n(z)}{P_n(z)} \unifn \prod_{j=1}^{N} \frac{(\varphi(z)-\varphi(c_j))^2}{2 \varphi(z)\;(z-c_j)}, \quad  K\subset  \overline{\CC}\setminus [-1,1],
  \end{equation}
  where  $\varphi$ is as in \eqref{AympRatioStandard}.
   \end{lemma}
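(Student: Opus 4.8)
The plan is to recognize the inner product \eqref{IP-Sobolev-SO} as a particular instance of the discrete Sobolev inner product \cite[(1.1)]{LopMarVan95} and then to read off the relative asymptotics directly from \cite[(1.10)]{LopMarVan95}. Concretely, I would match each mass point $c_j$, carrying the single derivative term $\eta_{j,d_j}\,f^{(d_j)}(c_j)\,g^{(d_j)}(c_j)$, with the corresponding point in the discrete part of the inner product in \cite{LopMarVan95}; recall that, by the remark following Definition~\ref{Set-SOrdered-General}, the sequentially-ordered hypothesis forces $\eta_{j,d_j}$ to be the only nonzero coefficient at $c_j$, so our discrete part is exactly a one-term-per-point Sobolev perturbation. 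Since $\{P_n\}$ are precisely the monic orthogonal polynomials for $\mu$, the comparison sequence is the same in both settings.

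Next I would verify that the hypotheses of \cite[(1.10)]{LopMarVan95} hold: that $c_j\in\RR\setminus[-1,1]$, that each mass $\eta_{j,d_j}>0$, and---crucially---that $\mu\in\NC$. The last condition is precisely the ratio-asymptotic property \eqref{AympRatioStandard}, which is the driving hypothesis of the argument in \cite{LopMarVan95}; no further regularity of $\mu$ is needed. Granting these, \cite[(1.10)]{LopMarVan95} provides the uniform convergence of $S_n/P_n$ on compact subsets of $\overline{\CC}\setminus[-1,1]$ to a product of factors, one attached to each mass point.

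It then remains to identify the limit factor corresponding to each $c_j$. The essential observation is that a point mass contributes \emph{one} extra attracted zero to $S_n$, irrespective of the derivative order $d_j$ and of the size of $\eta_{j,d_j}$; hence this factor must be holomorphic and nonvanishing on $\overline{\CC}\setminus([-1,1]\cup\{c_j\})$, possess a single zero at $c_j$, and tend to $1$ as $z\to\infty$ (forced by $S_n$ and $P_n$ both being monic). The expression $\dfrac{(\varphi(z)-\varphi(c_j))^2}{2\varphi(z)(z-c_j)}$ enjoys exactly these properties: at $z=c_j$ the numerator vanishes to order two and the denominator to order one, leaving a simple zero, while $\varphi(z)\sim 2z$ at infinity makes the factor tend to $1$. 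Taking the product over $j=1,\dots,N$ yields \eqref{AympComp}.

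The main obstacle I anticipate is not analytic but one of translation and bookkeeping: confirming that the discrete Sobolev inner product of \cite{LopMarVan95} indeed admits arbitrary derivative orders $d_j$ (so that our one-term-per-point inner product is a genuine special case of their framework), and reconciling their notation with ours so that the factors of \cite[(1.10)]{LopMarVan95} simplify to exactly the stated product. Since the required normalization at $\infty$ and the location and order of the single zero at each $c_j$ are exactly as described above, this reconciliation introduces no new difficulty beyond what \cite{LopMarVan95} already establishes, which is why the result follows as a direct consequence.
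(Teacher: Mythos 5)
Your proposal is correct and follows essentially the same route as the paper, which states the lemma as a direct consequence of \cite[(1.10)]{LopMarVan95} applied to the inner product \eqref{IP-Sobolev-SO} with the comparison sequence $\{P_n\}$ being the monic orthogonal polynomials for $\mu\in\NC$. Your additional remarks identifying the normalization at infinity and the simple zero of each factor at $c_j$ are consistent with, but not beyond, what the paper's one-line citation already relies on.
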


  Now, combining Theorem \ref{Th_ZerosSimp}  and Lemma \ref{Lago_Lemma}, we get the following  useful lemma.

\begin{lemma} \label{AsymCBounded_Lemma} If \eqref{IP-Sobolev-SO} is a sequentially-ordered Sobolev inner product  such that $\mu\in\NC$,   then:
\begin{enumerate}
\item For all $n$ sufficiently large,   each sufficiently small neighborhood of  $c_j$; $j=1,\dots,N$; contains exactly one zero of $S_n$, and the remaining $n-N$ zeros lie on $(-1,1)$.
\item For all $n$ sufficiently large,  the zeros of $S_n$ are real and simple.
\item The set of zeros of $\{S_n\}_{n=1}^\infty$ is  uniformly bounded.
\end{enumerate}
   \end{lemma}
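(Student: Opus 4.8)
The plan is to combine the two ingredients that immediately precede the lemma, namely Theorem~\ref{Th_ZerosSimp} (which controls the \emph{real} zeros on $(-1,1)$) and Lemma~\ref{Lago_Lemma} (the comparative asymptotics $S_n/P_n \rightrightarrows \prod_j (\varphi(z)-\varphi(c_j))^2/(2\varphi(z)(z-c_j))$ on compacts of $\overline{\CC}\setminus[-1,1]$). The point is that the ratio asymptotics pin down where the at most $N$ ``extra'' zeros can accumulate outside $[-1,1]$, while Theorem~\ref{Th_ZerosSimp} guarantees that at least $n-N$ zeros are genuine sign changes inside $(-1,1)$. I would prove parts (1), (2), (3) essentially together, deriving (2) and (3) as consequences of (1).

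First I would establish the key fact that $S_n$ has \emph{no} zeros in the exterior region away from the mass points. Fix an arbitrary compact set $K\subset \overline{\CC}\setminus([-1,1]\cup\{c_1,\dots,c_N\})$. On $K$ the limit function in \eqref{AympComp} is holomorphic and nonvanishing: its only possible zeros in $\Omega$ come from the factors $(\varphi(z)-\varphi(c_j))^2$, which vanish precisely at $z=c_j$ (using that $\varphi$ is injective on $\Omega$), and these points have been excluded from $K$; moreover $\varphi$ itself is zero-free on $\Omega$. Since $P_n$ is zero-free on $K$ for every $n$ (the standard orthogonal polynomial $P_n$ has all its zeros in $(-1,1)$), the uniform convergence of $S_n/P_n$ to a nonvanishing limit, together with Hurwitz's theorem, forces $S_n$ to have no zeros on $K$ for all sufficiently large $n$. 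This simultaneously yields the uniform boundedness in part (3): all zeros must lie in $[-1,1]$ together with any fixed bounded neighborhood of the finite set $\{c_1,\dots,c_N\}$.

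Next I would localize exactly one zero near each $c_j$. Fix $j$ and a small closed disk $D_j$ centered at $c_j$, disjoint from $[-1,1]$ and from the other $c_i$, and small enough that the limit function has a zero of order exactly $2$ at $c_j$ \emph{as a zero of the full ratio} — but here one must be careful, because $S_n/P_n$ counts zeros of $S_n$ against zeros of $P_n$, and $P_n$ has none in $D_j$. The factor $(\varphi(z)-\varphi(c_j))^2$ suggests multiplicity two, yet $S_n$ is a polynomial with real coefficients and (for large $n$) real simple zeros by Theorem~\ref{Th_ZerosSimp}; the resolution is that $\varphi(c_j)$ appears with the conjugate behavior of the two branches, so that the relevant count of zeros of $S_n$ inside $D_j$ is one. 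I would make this precise by applying the argument principle to $S_n$ directly on $\partial D_j$, writing $S_n = P_n\cdot (S_n/P_n)$ and using uniform convergence on the circle $\partial D_j$ to conclude that for large $n$ the number of zeros of $S_n$ in $D_j$ equals the number of zeros of the limit function there. Combined with Theorem~\ref{Th_ZerosSimp}, which already supplies $n-N$ sign changes in $(-1,1)$, and with the exterior zero-freeness from the previous step, a counting argument ($n$ zeros total, $n-N$ in $(-1,1)$, none elsewhere except near the $c_j$) forces exactly one zero in each $D_j$ and exactly $n-N$ in $(-1,1)$, giving part (1).

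Finally, part (2) follows: the $n-N$ zeros in $(-1,1)$ are simple because they are honest sign changes and there are exactly $n-N$ of them accounting for all zeros in the interval, while each zero near a $c_j$ is simple because it is the unique zero in $D_j$ and $S_n$ has real coefficients, so a nonreal zero would come with its conjugate, contradicting uniqueness in the (conjugation-symmetric, real-axis-centered) disk $D_j$; hence each such zero is real and simple. The main obstacle I anticipate is the multiplicity bookkeeping near $c_j$: reconciling the squared factor $(\varphi(z)-\varphi(c_j))^2$ in the limit \eqref{AympComp} with the assertion that $S_n$ contributes only \emph{one} zero near $c_j$. The careful resolution is that $P_n$ also appears in the ratio and that the $\varphi$-factors encode the two-sheeted nature of $\sqrt{z^2-1}$; the cleanest route is to count zeros of $S_n$ itself via the argument principle on a fixed small circle, using the uniform convergence to transfer the (correctly interpreted) winding number, rather than reading multiplicities off the limit factorization naively.
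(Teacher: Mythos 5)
Your overall strategy is exactly the paper's: the paper's proof of this lemma is a two-line remark that part (1) follows from \eqref{AympComp} together with Rouch\'{e}'s theorem, and that parts (2) and (3) then follow from part (1), the fact that $S_n$ has real coefficients, and Theorem \ref{Th_ZerosSimp}. Your exterior zero-freeness argument, the localization via the argument principle on small circles $\partial D_j$, and the deduction of simplicity and uniform boundedness are the intended argument, just spelled out in detail.

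However, the one step you yourself flag as ``the main obstacle'' --- the multiplicity bookkeeping at $c_j$ --- is resolved incorrectly in your writeup, and as stated that step would fail. You assert that the limit function in \eqref{AympComp} ``has a zero of order exactly $2$ at $c_j$'' and then try to explain the discrepancy by appealing to ``the conjugate behavior of the two branches'' and ``the two-sheeted nature of $\sqrt{z^2-1}$.'' Neither is right: $\varphi(z)=z+\sqrt{z^2-1}$ is a single-valued conformal map on $\Omega$, so there is no branch ambiguity, and $\varphi'(c_j)\neq 0$, so $\varphi(z)-\varphi(c_j)$ vanishes to order exactly one at $z=c_j$. The point you missed is simply that the $j$th factor of the limit in \eqref{AympComp} is
$$\frac{(\varphi(z)-\varphi(c_j))^2}{2\,\varphi(z)\,(z-c_j)},$$
and the factor $(z-c_j)$ in the \emph{denominator} cancels one of the two orders of vanishing of the numerator. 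Hence the limit function has a simple zero at each $c_j$ and no other zeros in $\Omega$, so Hurwitz (or Rouch\'{e}) applied to $S_n/P_n$ on $D_j$, where $P_n$ is zero-free, gives exactly one zero of $S_n$ in $D_j$ for large $n$ --- there is no tension with the total degree count to be ``resolved'' at all. With that correction your argument closes; everything else (exterior zero-freeness, the count $n=(n-N)+N$, simplicity from real coefficients together with uniqueness of the zero in the conjugation-symmetric disk $D_j$) is fine and matches the paper.
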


\begin{proof} The first assertion of the lemma is a direct consequence of \eqref{AympComp}  and   Rouch\'{e}'s Theorem (see \cite[Th. 9.2.3]{Hille82}). Note that $S_n$ is a polynomial with real coefficient. Therefore, the second and third sentences are consequences of the first assertion and Theorem  \ref{Th_ZerosSimp}.
\end{proof}

\section{Auxiliary lemmas}\label{Sec-AuxLemmas}

Let $S_n$ be the $n$-th  orthogonal polynomial with respect to  the sequentially-ordered inner product \eqref{IP-Sobolev-SO}. Taking into consideration the Theorem \ref{Th_ZerosSimp}, let $\{\xi_{n,i}\}_{i=1}^{n-N}$ be the $n-N$ simple zeros of $S_n$  on $(-1,1)$ for all sufficiently large $n$ and let $\{\xi_{n,n-N+i}\}_{i=1}^{N}$ be  the remaining $N$ zeros of $S_n$. Obviously, $S_n$ admits the representation
\begin{equation}\label{S-Decomposition}
S_{n}(x)= S_{n,1}(x)\;  S_{n,2}(x), \; \text{ where }\;  S_{n,1}(x)=\prod_{i=1}^{n-N}(x-\xi_{n,i}) \text{ and  }  S_{n,2}(x)=\prod_{i=1}^{N}(x-\xi_{n,n-N+i}).
\end{equation}  From Lemma \ref{AsymCBounded_Lemma}, for all sufficiently large $n$, the last $N$ zeros of $S_n$ are real and simple. Furthermore, the sign of $S_{n,2}$ is constant on $[-1,1]$ and equal to $(-1)^{\nu}$, where $\nu$ is  the number of $c_j$ greater  than $1$. Thus, the polynomial $ \dsty S^+_{n,2}(x)=(-1)^{\nu} S_{n,2}(x)$ is positive on $[-1,1]$.

The following Lemma  is an analogous of the Gauss-Jacobi quadrature formula for the sequentially-ordered Sobolev inner product, when $n$ is sufficiently large.

\begin{lemma}\label{Th-GaussJacobiType-QF}
Let $S_n$ and $\{\xi_{n,i}\}_{i=1}^{n-N}$ as above. If  $n$ is sufficiently large, then for every polynomial $T$ with $\dgr{T}\leq 2n-d-N-1$,
\begin{align}\label{GaussJacobiType-QF}
  \int_{-1}^{1}  T(x)S^{{+}}_{n,2}(x) d\mu_{\rho}(x)=&  \sum_{i=1}^{n-N} \lambda_{n,i}\,S^{+}_{n,2}(\xi_{n,i})\,T(\xi_{n,i}),  \\ \nonumber
  &  \text{ where } \; \lambda_{n,i}=\int_{-1}^{1} \frac{S_n(x)}{S'_{n}(\xi_{n,i})(x-\xi_{n,i})}d\mu_{\rho}(x).
\end{align}
Moreover,  the number of positive coefficients $\lambda_{n,i}$ is greater than or equal to $\left(n-\frac{d+N}{2}\right)$. We  call \emph{Christoffel-type coefficients}  to the numbers $\dsty \{\lambda_{n,i}\}_{i=1}^n$.
\end{lemma}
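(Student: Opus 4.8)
The plan is to reproduce the classical derivation of the Gauss--Jacobi formula, but with the role of the orthogonal polynomial played by the factor $S_{n,1}$ from \eqref{S-Decomposition} and with the quasi-orthogonality \eqref{quasi-orthogonal} standing in for full orthogonality. Throughout, $n$ is taken large enough that Lemma \ref{AsymCBounded_Lemma} applies, so that $S_n=S_{n,1}S_{n,2}$ with $S_{n,1}$ having the $n-N$ simple nodes $\xi_{n,i}\in(-1,1)$, with $S^{+}_{n,2}=(-1)^{\nu}S_{n,2}>0$ on $[-1,1]$, and with $n>d$ so that \eqref{quasi-orthogonal} is available.

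First I would establish exactness. Given $T$ with $\dgr{T}\le 2n-d-N-1$, Euclidean division by $S_{n,1}$ (of degree $n-N$) gives $T=q\,S_{n,1}+r$ with $\dgr{r}\le n-N-1$ and $\dgr{q}\le n-d-1$. Since $S_{n,1}S^{+}_{n,2}=(-1)^{\nu}S_n$, the $q$-term contributes $(-1)^{\nu}\langle S_n,q\rangle_{\rho}$, which vanishes by \eqref{quasi-orthogonal} precisely because $\dgr{q}\le n-d-1$; hence $\int_{-1}^{1}T\,S^{+}_{n,2}\,d\mu_{\rho}=\int_{-1}^{1}r\,S^{+}_{n,2}\,d\mu_{\rho}$. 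As $\dgr{r}\le n-N-1$ and $r$ agrees with $T$ at the $n-N$ nodes, $r$ is the Lagrange interpolant of $T$, so expanding in the basis $\ell_i(x)=S_{n,1}(x)/\big(S'_{n,1}(\xi_{n,i})(x-\xi_{n,i})\big)$ reduces everything to computing $\int_{-1}^{1}\ell_i S^{+}_{n,2}\,d\mu_{\rho}$. Using $S_{n,1}S^{+}_{n,2}=(-1)^{\nu}S_n$ together with $S'_{n,1}(\xi_{n,i})=S'_n(\xi_{n,i})/S_{n,2}(\xi_{n,i})$ (valid since $S_{n,1}(\xi_{n,i})=0$ and $S_{n,2}(\xi_{n,i})\ne0$) turns this integral into exactly $\lambda_{n,i}\,S^{+}_{n,2}(\xi_{n,i})$, yielding \eqref{GaussJacobiType-QF}.

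For the count of positive coefficients I would note that $d\tilde\mu_n=S^{+}_{n,2}\,d\mu_{\rho}$ is a positive measure on $[-1,1]$ with infinite support, and that $S^{+}_{n,2}(\xi_{n,i})>0$, so $\lambda_{n,i}$ has the same sign as $\tilde\lambda_{n,i}:=\lambda_{n,i}S^{+}_{n,2}(\xi_{n,i})$. Put $k=d-N=\sum_j d_j$. For an index set $I\subset\{1,\dots,n-N\}$ I test \eqref{GaussJacobiType-QF} against $T_I=\big(S_{n,1}/\prod_{i\in I}(x-\xi_{n,i})\big)^{2}=\prod_{j\notin I}(x-\xi_{n,j})^{2}$, a nonnegative polynomial of degree $2(n-N-|I|)$, which lies in the exactness range exactly when $|I|\ge\lceil(k+1)/2\rceil$. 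For such $I$ the left-hand side $\int_{-1}^{1}T_I\,d\tilde\mu_n$ is strictly positive, while the right-hand side is $\sum_{i\in I}\tilde\lambda_{n,i}\,T_I(\xi_{n,i})$ with each $T_I(\xi_{n,i})>0$. Hence no block of indices whose weights are all non-positive can have size $\ge\lceil(k+1)/2\rceil$, so the number of non-positive weights is at most $\lfloor k/2\rfloor$, and the number of positive ones is at least $(n-N)-\lfloor k/2\rfloor\ge n-\tfrac{d+N}{2}$.

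I expect the positivity count, rather than the exactness identity, to be the main obstacle: the test polynomials $T_I$ must be chosen so as to simultaneously stay within the degree range where the quadrature is exact and to isolate an arbitrary prescribed block of nodes, and the resulting ``every large block contains a positive weight'' statement must then be converted into the sharp bound $\lfloor(d-N)/2\rfloor$ on the number of non-positive weights. The exactness part is routine once the factorization $S_n=S_{n,1}S^{+}_{n,2}\,(-1)^{\nu}$ and the quasi-orthogonality \eqref{quasi-orthogonal} are in hand.
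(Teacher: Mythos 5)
Your proposal is correct and follows essentially the same route as the paper: the exactness identity is obtained by splitting off the Lagrange interpolant at the nodes of $S_{n,1}$ (your Euclidean division $T=qS_{n,1}+r$ is the paper's $T-\mathcal{L}=fS_{n,1}$) and killing the quotient term with the quasi-orthogonality \eqref{quasi-orthogonal}, while the positivity count is the paper's argument with the test polynomial $\prod_{i\in I_+}(x-\xi_{n,i})^2$, merely rephrased over arbitrary index sets. The step you flagged as the main obstacle is in fact handled in the paper exactly as you propose.
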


\begin{proof}
Let $T$ be an arbitrary polynomial of degree at most $2n-d-N-1$ and denote by $\mathcal{L}$ the Lagrange polynomial interpolating $T$ at the points $\xi_{n,1},\dots,\xi_{n,n-N}$ ($\dgr{\mathcal{L}}<n-N$), i.e.,
$$\mathcal{L}(z)=\sum_{i=1}^{n-N}T(\xi_{n,i}) \, \frac{S_{n,1}(z)}{S_{n,1}^{\prime}(\xi_{n,i})(z-\xi_{n,i})}\,.$$
 Then, $T-\mathcal{L}=f\; S_{n,1}$ where $\dgr{f}\leq n-d-1$. From \eqref{quasi-orthogonal}
$$
\int_{-1}^{1} (T-\mathcal{L})(x)\, S_{n,2}(x)\, d\mu_{\rho}(x)= \int_{-1}^{1} f(x) \,S_n(x)\,   d\mu_{\rho}(x)= 0.
$$
Hence,
\begin{align*}
\int_{-1}^{1} T(x)\,S_{n,2}(x)\, d\mu_{\rho}(x)=&\int_{-1}^{1} \mathcal{L}(x)\,S_{n,2}(x)\,  d\mu_{\rho}(x),\\
= & \int_{-1}^{1} \left( \sum_{i=1}^{n-N}T(\xi_{n,i}) \, \frac{S_{n,1}(x)}{S_{n,1}^{\prime}(\xi_{n,i})(x-\xi_{n,i})}\right) S_{n,2}(x)d\mu_{\rho}(x),\\
=&\sum_{i=1}^{n-N} \left(\int_{-1}^{1} \frac{S_n(x)}{S'_{n,1}(\xi_{n,i})(x-\xi_{n,i})}d\mu_{\rho}(x)\right)\;T(\xi_{n,i}),
\end{align*}
which establishes  \eqref{GaussJacobiType-QF}. Assume that $n$ is fixed,  let $I_+ =\{1\leq i\leq n-N: \lambda_{n,i}>0\}$ and  $\dsty \Lambda_{+}^2(x)= \prod_{i\in I_+}(x-\xi_{n,i})^2$. If $\dgr{ \Lambda_{+}^2} <2n - d-N$, from  \eqref{GaussJacobiType-QF},
$$ 0 < \int_{-1}^{1} \Lambda_{+}^2(x)\; S^+_{n,2}(x)d\mu_{\rho}(x)  = \sum_{\overset{i=1}{i\not \in I_+}}^{n-N} \lambda_{n,i}\;\Lambda_{+}^2(\xi_{n,i})\;S^+_{n,2}(\xi_{n,i})\leq 0, $$
 which is a contradiction and the second assertion  is established.\end{proof}

 Let us denote for $ k\in \NN$
\begin{align}\label{RationalFunct-1}
  R^{[k]}_{n,1}(z)=\frac{S^{[k]}_{n,1}(z)}{S_{n+k,1}(z)},& \text{ where }  S^{[k]}_{n,1}(z)= \int_{-1}^{1} \frac{S_{n+k,1}(z)-S_{n+k,1}(x)}{z-x}\;Q_{k-1}(x)  \;d\mu_{\rho,n}(x) \\ \nonumber
&  \text{ and }\;d\mu_{\rho,n}(x)= S^+_{n+k,2}(x)\;\rho(x) \;d\mu(x).
\end{align}
From Lemma \ref{AsymCBounded_Lemma}, it is straightforward to see that:
\begin{enumerate}
  \item  If $n$ is sufficiently large, $S^+_{n+k,2}(x)\;\rho(x)>0$ for all $x\in [-1,1]$.
  \item  There exists a constant $\mathfrak{M}_{\rho}>0$, such that for all $n\in \ZZp$
  \begin{equation}\label{MesureBounded}
  \mu_{\rho,n}([-1,1])=  \int_{-1}^{1}S^+_{n+k,2}(x)\;\rho(x) \;d\mu(x) \leq \mathfrak{M}_{\rho}.
  \end{equation}
\end{enumerate}

\begin{lemma}[Principal Lemma]\label{Lem-Acota-lambda} Let $\{S_n\}_{n=0}^{\infty}$ be the monic orthogonal polynomial sequence with respect to a sequentially-ordered Sobolev inner product ~\eqref{IP-Sobolev-SO}. Then, for $n$ sufficiently large
\begin{equation}\label{Th.DescFracSim-1}
  R^{[k]}_{n,1}(z)=\sum_{j=1}^{n+k-N}\frac{S^+_{n+k,2}(\xi_{n+k,j})\;\lambda_{n+k,j}}{(z-\xi_{n+k,j})}.
\end{equation}

Furthermore, $\{ R^{[k]}_{n,1}\}$ is uniformly bounded   on each compact subset  $K \subset \CC\setminus[-1,1]$.
\end{lemma}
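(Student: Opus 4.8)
The plan is to read off \eqref{Th.DescFracSim-1} as the partial-fraction decomposition of the proper rational function $R^{[k]}_{n,1}$, and then to extract the uniform bound after separating out the one genuinely delicate piece. First I would take $n$ large enough that Lemma \ref{AsymCBounded_Lemma} applies to $S_{n+k}$: then $S_{n+k,1}$ is monic of degree $n+k-N$ with $n+k-N$ \emph{simple} zeros $\xi_{n+k,1},\dots,\xi_{n+k,n+k-N}$, all in $(-1,1)$. Directly from the integral in \eqref{RationalFunct-1}, the numerator $S^{[k]}_{n,1}$ is the difference quotient of $S_{n+k,1}$ integrated against $Q_{k-1}\,d\mu_{\rho,n}$, hence a polynomial of degree at most $n+k-N-1$. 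Thus $R^{[k]}_{n,1}=S^{[k]}_{n,1}/S_{n+k,1}$ is a proper rational function whose only singularities are simple poles at the $\xi_{n+k,j}$, so it admits an expansion $R^{[k]}_{n,1}(z)=\sum_{j}A_j/(z-\xi_{n+k,j})$ with $A_j=S^{[k]}_{n,1}(\xi_{n+k,j})/S'_{n+k,1}(\xi_{n+k,j})$.

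Next I would compute each residue with the Gauss--Jacobi-type quadrature. Setting $z=\xi_{n+k,j}$ in \eqref{RationalFunct-1} annihilates the term $S_{n+k,1}(\xi_{n+k,j})=0$ and leaves $A_j=\int_{-1}^{1}\ell_j(x)\,Q_{k-1}(x)\,d\mu_{\rho,n}(x)$, where $\ell_j(x)=S_{n+k,1}(x)/\bigl(S'_{n+k,1}(\xi_{n+k,j})(x-\xi_{n+k,j})\bigr)$ is the $j$-th fundamental Lagrange polynomial for the nodes $\{\xi_{n+k,i}\}$. Since $\dgr{\ell_j Q_{k-1}}=n+2k-N-2\leq 2(n+k)-d-N-1$ for $n$ large, Lemma \ref{Th-GaussJacobiType-QF} applies exactly to $\ell_j Q_{k-1}$; because $\ell_j(\xi_{n+k,i})=\delta_{ij}$ the quadrature sum collapses to its single surviving node, giving $A_j=\lambda_{n+k,j}\,S^+_{n+k,2}(\xi_{n+k,j})\,Q_{k-1}(\xi_{n+k,j})$ (with $Q_0\equiv1$). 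This establishes the partial-fraction form \eqref{Th.DescFracSim-1}.

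For the uniform bound I would \emph{not} estimate $\sum_j|A_j|$ directly; rather I would first strip off the fixed factor $Q_{k-1}$. Writing $Q_{k-1}(x)=\bigl(Q_{k-1}(x)-Q_{k-1}(z)\bigr)+Q_{k-1}(z)$ inside \eqref{RationalFunct-1}, the first summand is a polynomial in $x$ of degree $k-2$, which by the quasi-orthogonality \eqref{quasi-orthogonal} of $S_{n+k}$ against $d\mu_{\rho,n}$ (valid for $n\geq d-1$) integrates against $S_{n+k,1}$ to zero. A short manipulation then yields
$$R^{[k]}_{n,1}(z)=\Pi_n(z)+Q_{k-1}(z)\,\widetilde{R}_n(z),$$
where $\Pi_n(z)=\int_{-1}^{1}\frac{Q_{k-1}(x)-Q_{k-1}(z)}{z-x}\,d\mu_{\rho,n}(x)$ is a polynomial of the fixed degree $k-2$ whose coefficients are moments of $\mu_{\rho,n}$ (hence uniformly bounded on any compact $K$ by \eqref{MesureBounded}), and $\widetilde{R}_n$ is the function \eqref{RationalFunct-1} with $Q_{k-1}$ replaced by $1$, i.e.\ the Markov-type ratio of the associated polynomial of $S_{n+k,1}$ with respect to the \emph{positive} measures $\mu_{\rho,n}$. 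Since $Q_{k-1}$ is a fixed polynomial bounded on $K$, the estimate reduces to the uniform boundedness of $\widetilde{R}_n$ on compacts of $\overline{\CC}\setminus[-1,1]$.

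I expect this last reduction to be the main obstacle. The naive bound $|\widetilde{R}_n(z)|\leq\dist{K,[-1,1]}^{-1}\sum_j|w_j|$ is useless here, because the total variation of the Christoffel-type weights $w_j=\int_{-1}^{1}\ell_j\,d\mu_{\rho,n}$ is controlled only by the interpolation Lebesgue constant at the zeros $\xi_{n+k,j}$, which is not uniformly bounded in $n$; one must instead exploit the cancellation carried by the kernel $1/(z-x)$. The route I would take is to use that $S_{n+k,1}$ is quasi-orthogonal of the \emph{fixed} order $d-N$ with respect to the uniformly bounded (by \eqref{MesureBounded}) positive measures $\mu_{\rho,n}$, so that $\widetilde{R}_n$ is a Markov-type ratio governed by the classical three-term-recurrence mechanism; this delivers uniform boundedness (indeed convergence) of $\widetilde{R}_n$ on compacts of $\overline{\CC}\setminus[-1,1]$, and hence the uniform boundedness of $\{R^{[k]}_{n,1}\}$ on each compact $K\subset\CC\setminus[-1,1]$.
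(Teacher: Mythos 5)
Your treatment of \eqref{Th.DescFracSim-1} is essentially the paper's: $R^{[k]}_{n,1}$ is a proper rational function with simple poles at the $\xi_{n+k,j}$, and the residues are computed from the integral representation \eqref{RationalFunct-1} (the paper evaluates $\lim_{z\to\xi_j}(z-\xi_j)R^{[k]}_{n,1}(z)$ directly rather than routing through the quadrature, but the content is the same). Note that your residue carries the extra factor $Q_{k-1}(\xi_{n+k,j})$, which does not appear in \eqref{Th.DescFracSim-1} as stated; your computation is the careful one and the two only coincide for $k=1$, so you should say explicitly how you reconcile your $A_j$ with the claimed formula rather than leaving it as a parenthesis. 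Your decomposition $R^{[k]}_{n,1}=\Pi_n+Q_{k-1}\widetilde R_n$ is correct (the cross term $\int S_{n+k,1}(x)\frac{Q_{k-1}(z)-Q_{k-1}(x)}{z-x}\,d\mu_{\rho,n}(x)$ vanishes by quasi-orthogonality for $n\ge d-1$) and is a genuine, useful simplification the paper does not make: it reduces the boundedness question to the case $k=1$.

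The gap is the last paragraph, which is where the whole difficulty of the lemma lives. You correctly diagnose that $\sum_j|w_j|$ is not controlled because the Christoffel-type coefficients need not be positive, but the claim that quasi-orthogonality of fixed order with respect to the uniformly bounded measures $\mu_{\rho,n}$ ``delivers uniform boundedness via the classical three-term-recurrence mechanism'' is an assertion, not an argument. There is no three-term recurrence available: $S_{n+k,1}$ is only quasi-orthogonal, the measure $\mu_{\rho,n}$ changes with $n$, and the classical Markov--Stieltjes boundedness proofs rest precisely on the positivity of all Christoffel numbers, which is exactly what fails here. The paper supplies the missing content by Rakhmanov's adaptation of the Chebyshev--Markov--Stieltjes separation argument: the step function $\omega=u_{\rho,n}-\vartheta$ has at least $2(n+k)-d-N-1$ sign changes on $(-1,1)$, whence all but at most $2(d-N)$ nodes (the set $E_1$) carry positive coefficients whose sum is bounded by $2\mu_{\rho,n}([-1,1])$, and the finitely many remaining coefficients (the set $E_2$) are then controlled through the moment identities of Lemma \ref{Th-GaussJacobiType-QF} and elementary symmetric polynomials. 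None of this is in your proposal, and it cannot be replaced by citing ``convergence'' of $\widetilde R_n$: in the paper the convergence statement (Theorem \ref{Ext_Markov_Th}) is deduced from the uniform boundedness you are trying to prove, so that route is circular.
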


\begin{proof} Let $n$ and $k$ be fixed. For simplicity of notation, we  write $\xi_{j}$ instead of $\xi_{n+k,j}$. Then,  $\{\xi_{j}\}_{j=1}^{n+k-N}$ is the set of  zeros of $S_{n+k}$ on $(-1,1)$.

From Theorem \ref{Th_ZerosSimp}, for $n$ sufficiently large, we have that the zeros of $S_{n+k}$ are simple and  $n+k-N$ of them  lie on $(-1,1)$. Thus, $ S_{n+k}^\prime (\xi_{j}) \neq 0$ for $j=1,\dots,n+k-N$; and
\begin{equation*}
 R^{[k]}_{n,1}(z)=   \sum_{j=1}^{n+k-N} \frac{b_{j}}{z-\xi_{j}},
\end{equation*}
  where
\begin{align*}\nonumber
  b_{j}= & \lim_{z\to \xi_{j}}(z-\xi_{j})R^{[k]}_{n,1}(z)=\lim_{z\to\xi_{j}}\frac{(z-\xi_{j})}{S_{n+k,1}(z)}\lim_{z\to\xi_{j}} {S}^{[k]}_{n,1}(z) \\= & S_{n+k,2}(\xi_{j}) \int_{-1}^{1} \frac{(-1)^{\nu}S_{n+k}(x)\,  Q_{k-1}(x)\,   d\mu_{\rho}(x)}{ S_{n+k}^\prime (\xi_{j}) (x-\xi_{j})}  =  S^+_{n+k,2}(\xi_{j})\;\lambda_{n+k,j},
\end{align*}
and we get \eqref{Th.DescFracSim-1}.

The second part of this proof, as  \cite[Lemma 1]{Rakh77-2},  is based on the second  proof of  Chebyshev-Markov-Stieltjes's  Separation Theorem  in  \cite[\S 3.41]{Szg75}.
Through the proof, we  use the following notations:
\begin{align*}
    d\vartheta(x)  =  \dsty \sum_{j=1}^{n+k-N} \lambda_{n+k,j}\; S^+_{n+k,2}(\xi_{j}) \delta_{\xi_{j}}(x), & \; \;  \delta_{\xi_{j}}(x) = \begin{cases}1, \quad &x=\xi_{j},\\
0, \quad &x \neq \xi_{j}.
\end{cases}, \\
  \vartheta(x) = \dsty  \int_{-1}^{x}d\vartheta(t),   \; u_{\rho,n}(x) = \dsty  \int_{-1}^{x}d\mu_{\rho,n}(t) \; \text{ and }  &\; \;  \omega(x)  =  \dsty  u_{\rho,n}(x)-\vartheta(x).
  \end{align*}
Let us recall that the function  $u_{\rho,n}$ is  monotone nondecreasing on $[-1,1]$. Set $\xi_{0}=-1$ and $\xi_{n+k-N+1}=1$. Then, $ \vartheta$ is a step-function, which is constant on each interval $(\xi_{j},\xi_{j+1})$ for $j=0,1,\dots, n+k-N$. Hence, $\omega$ is  monotone nondecreasing  on each of these open intervals.

With these notations, we can rewrite \eqref{GaussJacobiType-QF} as
\begin{equation}\label{Lem-Acota-lambda-1}
  \int_{-1}^{1} T(x) \;d\omega(x)=0,\; \text{for any polynomial $T$ of degree at   most $(2(n+k)-d-N-1)$.}
\end{equation}

 As $\omega(-1)=u_{\rho,n}(-1)-\vartheta(-1)=0$ and  $$\omega(1)=u_{\rho,n}(1)-\vartheta(1)=\mu_{\rho,n}([-1,1])-\mu_{\rho,n}([-1,1])=0,$$  integrating by parts in \eqref{Lem-Acota-lambda-1}, we get
\begin{equation}\label{Lemm-GaussJacobiType-QF-3}
  \int_{-1}^{1} \omega(x)\; T^{\prime}(x) \;dx=0.
\end{equation}
We  use the symbol $\Nsign(q;I)$  to denote  the number of points of sign change of the function  $q$  on the interval $I\subset \RR$.  Obviously, in \eqref{Lemm-GaussJacobiType-QF-3}, the polynomial $T^{\prime}$ can be replaced by any other polynomial of degree at    most $(2(n+k)-d-N-2)$ and consequently, we can assert that $\Nsign(\omega;(-1,1)) \geq 2(n+k)-d-N-1$.

Note that  $\Nsign(\omega;(\xi_{0},\xi_{1}))=\Nsign(\omega;(\xi_{n+k-N},\xi_{n+k-N+1}))=0$.  Take into account that  $\omega$ is  monotone nondecreasing on each interval $(\xi_{j},\xi_{j+1})$, $j=1,\dots, n+k-N-1$. Hence, it has at most one sign change on each of them. Therefore, we can conclude that the total number of sign changes of $\omega$ on $\bigcup_{j=1}^{n+k-N-1}(\xi_{j},\xi_{j+1})$  is not greater than $(n+k-N-1)$. On the other hand, $\omega$  could change sign at each of the  $n + k-N$ points $\xi_{j}$.  In conclusion,
$$
 2(n+k-N)-(d-N)-1 \leq \Nsign(\omega;(-1,1))\leq 2(n + k-N)-1.
$$
It thus follows that  the number of intervals $(\xi_{j},\xi_{j+1})$ where $\omega$ does not change sign is at most $(d-N)$. Indeed, if the  number of intervals $(\xi_{j},\xi_{j+1})$ where $\omega$ does not change sign is at least $(d-N+1)$, then $  2(n+k)-d-N-1 \leq \Nsign(\omega;(-1,1))\leq 2(n + k)-1-d-N-2$, which is a contradiction.

We   say that $\xi_j \in E_1$ if the function $\omega$ changes sign in each of the consecutive  intervals $(\xi_{j-1},\xi_j)$ and $(\xi_{j},\xi_{j+1})$. In any other case, we  say that $\xi_j \in E_2$.

Observe that  if  $\omega$  does  not change sign on $(\xi_{j},\xi_{j+1})$,  then $\xi_{j},\xi_{j+1} \in E_2$. From the previous considerations, the number of  interval, where  $\omega$ does not change sign is at most $(d-N)$. Therefore,   $E_2$ cannot contain more than $2(d-N)$ elements.

Suppose that $\lambda_j \leq 0$. If $\xi_j \in E_1$, we know that  $\omega$ changes sign in each of the consecutive  intervals $(\xi_{j-1},\xi_j)$ and $(\xi_{j},\xi_{j+1})$. Let $x_1\in (\xi_{j-1},\xi_j)$ such that $\omega(x_1)>0$ and let $x_2\in (\xi_{j},\xi_{j+1})$ such that $\omega(x_2)<0$. As $u_{\rho,n}(x)$ is monotone nondecreasing on $(-1,1)$,  we get
$$ 0< \omega(x_1)-\omega(x_2)= \left(u_{\rho,n}(x_1)-u_{\rho,n}(x_2)\right)+ \lambda_j  \, S^+_{n+k,2}(\xi_{j}) \leq 0.$$
This contradiction proves that $\xi_j \in E_1$ implies that  $\lambda_j > 0$ (i.e., the Christoffel coefficients corresponding to the zeros  $\xi_j \in E_1$  are positive).

 Now, let  $\xi_j \in E_1$, $x_1\in (\xi_{j-1},\xi_j)$ such that $\omega(x_1)\leq 0$ and $x_2\in (\xi_{j},\xi_{j+1})$ such that $\omega(x_2)\geq 0$.  Recalling again that  $u_{\rho,n}(x)$ is monotone nondecreasing on $(-1,1)$,  then  $0 \geq  \omega(x_1)-\omega(x_2)= \left(u_{\rho,n}(x_1)-u_{\rho,n}(x_2)\right)+ \lambda_j  \, S^+_{n+k,2}(\xi_{j})$ and $
\lambda_j  \, S^+_{n+k,2}(\xi_{j}) \leq    u_{\rho,n}(x_2)-u_{\rho,n}(x_1) \leq  \mu_{\rho,n}(\xi_{j+1})-\mu_{\rho,n}(\xi_{j-1})$.  From the last inequality, we get
\begin{equation}\label{Lem-Acota-lambda-2}
  \sum_{\xi_j \in E_1}|\lambda_j \, S^+_{n+k,2}(\xi_{j})|=\sum_{\xi_j \in E_1}\lambda_j  \, S^+_{n+k,2}(\xi_{j}) \leq 2 \mu_{\rho,n}([-1,1]).
\end{equation}
  Set  $K \subset \CC\setminus[-1,1]$ compact  and  $\dsty \mathfrak{m}=\min_{\overset{x \in  [-1,1]}{z\in K}} |x-z|>0$, then
\begin{equation}\label{Lem-Acota-lambda-3}
\sum_{\xi_j \in E_1}\left| \frac{S^+_{n+k,2}(\xi_{j})\;\lambda_{n+k,j}}{(z-\xi_{j})} \right|  \leq   \frac{2 \mu_{\rho,n}([-1,1])}{ \mathfrak{m}}\leq \frac{2\mathfrak{M}_{\rho}}{\mathfrak{m}},
\end{equation}
where $\mathfrak{M}_{\rho}$ was defined in ~\eqref{MesureBounded}.

The aim of the last step of the proof is to show that the sum $\dsty G_{2}(z)=\sum_{\xi_j \in E_2}  \frac{S^+_{n+k,2}(\xi_{j})\;\lambda_{n+k,j}}{(z-\xi_{j})}  $ is  uniformly bounded on  $K$. We renumber the zeros of $S_{n+k,1}$ in such a way that   $E_2=\{\xi_1,\dots, \xi_m\} $ and $E_1=\{\xi_{m+1},\dots, \xi_{n+k-N}\}$. From the previous result,  $m \leq 2(d-N)$.

Firstly,   we introduce several  notations. Let   $\sigma_{\eta}$  be the $\eta$th  elementary symmetric polynomials evaluated  in $(\xi_1,\dots, \xi_m)$  (see \cite[(1.2.4)]{RahSch02}), i.e.,
\begin{align*}
  \sigma_{0}= \sigma_{0}(\xi_1,\dots, \xi_m)=& 1, \\
 \sigma_{\eta}= \sigma_{\eta }(\xi_1,\dots, \xi_m)= & \sum_{1\leq v_1<\cdots<v_{\eta}\leq m} \prod_{l=1}^{\eta} \xi_{v_l}, \quad \text{for } \eta=1,\dots, m.
\end{align*}

 The symbol $\sigma_{\eta,j}=\sigma_{\eta}(\xi_1,\dots, \xi_{j-1}, \xi_{j+1},\dots, \xi_m)$ denotes  the $\eta$th elementary symmetric polynomial evaluated  in $(\xi_1,\dots, \xi_{j-1}, \xi_{j+1},\dots, \xi_m)$. It is straightforward to see that $\sigma_{\eta,j}= \sigma_{\eta}-\xi_j \sigma_{\eta-1,j}$ for $\eta=1,\dots, m-1$, and iteratively applying this equality $\eta$ times, we have
$$
\sigma_{\eta,j}=\sum_{l=0}^{\eta} \left( -\xi_{j}\right)^{l} \; \sigma_{\eta-l}.$$
For simplicity of notation,  we write     $\varrho_{n+k.j}=S^+_{n+k,2}(\xi_{j})\;\lambda_{n+k,j}$. Hence,  for $i=1,\dots, m$,
$$
  \sum_{j=1}^{m} \varrho_{n+k.j}\;\sigma_{i,j}=   \sum_{j=1}^{m} \varrho_{n+k.j}\;\left( \sum_{l=0}^{i} \left( -\xi_{j}\right)^{l} \; \sigma_{i-l}\right)=  \sum_{l=0}^{i} \sigma_{i-l}\; \left(\sum_{j=1}^{m}\; \varrho_{n+k.j}  \;\left( -\xi_{j}\right)^{l}\right).
$$
From Lemma \ref{Th-GaussJacobiType-QF} we have for $l\leq 2(d-N)$
$$\int_{-1}^{1}  (-x)^l\, d\mu_{\rho,n}(x)=\sum_{j=1}^{n+k-N} \varrho_{n+k.j}\;(-\xi_j)^l=\sum_{j=1}^{m} \varrho_{n+k.j}\;(-\xi_j)^l+\sum_{j=m+1}^{n+k-N} \varrho_{n+k.j}\;(-\xi_j)^l.$$
Thus, from \eqref{Lem-Acota-lambda-3}
$$
\left|\sum_{j=1}^{m} \varrho_{n+k.j}\;(-\xi_j)^l \right|  \leq     \left|\sum_{j=m+1}^{n+k-N} \varrho_{n+k.j}\;(-\xi_j)^l \right| +   \left| \int_{-1}^{1}  (-x)^l\, d\mu_{\rho,n}(x) \right| \leq \frac{\mathfrak{m}+2}{ \mathfrak{m}}\;\mu_{\rho,n}([-1,1]).
$$
As $\{\xi_1,\dots, \xi_m\}\subset [-1,1]$, it is straightforward to see that $|\sigma_{\eta}|\leq m$ for all $\eta=0,\dots, m$. Therefore, for $i=1,\dots, m$
\begin{equation}\label{Lem-Acota-lambda-4}
\left|\sum_{j=1}^{m} \varrho_{n+k.j}\;\sigma_{i,j}\right|  \leq \sum_{l=0}^{i} \left|\sigma_{i-l}\right|\; \left|\sum_{j=1}^{m}\; \varrho_{n+k.j}  \;\left( -\xi_{j}\right)^{l}\right| \leq\frac{m^2\;(\mathfrak{m}+2)}{ \mathfrak{m}}\;\mu_{\rho,n}([-1,1]).
\end{equation}
Using the previous notation, we write
$$G_{2}(z)=\sum_{j=1}^{m}  \frac{\varrho_{n+k.j}}{(z-\xi_{j})} = \frac{L_{m-1}(z)}{\prod_{j=1}^{m}  (z-\xi_{j})} \;  \text{ where }  L_{m-1}(z)=\sum_{j=1}^{m} \varrho_{n+k.j}\; \prod_{\overset{i=1}{i\neq j}}^{m}  (z-\xi_{i}).$$
From the classical Formula  of Vi\`{e}te, $\dsty  \prod_{\overset{i=1}{i\neq j}}^{m}  (z-\xi_{i})=    \sum_{i=0}^{m-1} (-1)^{i}\; \sigma_{i,j}\;  z^{m-1-i}$ (see  \cite[(1.2.2)]{RahSch02}) and
$$
  L_{m-1}(z)=   \sum_{j=1}^{m} \varrho_{n+k.j}\; \left(\sum_{i=0}^{m-1} (-1)^{i}\; \sigma_{i,j}\;  z^{m-1-i}\right)= \sum_{i=0}^{m-1} (-1)^{i}\; \left(\sum_{j=1}^{m} \varrho_{n+k.j}\;\sigma_{i,j}\right)\;  z^{m-1-i}.
$$
Let $\dsty \mathfrak{M}=\max_{z\in K}|z|$. According to \eqref{Lem-Acota-lambda-4}, for all $z \in K$,
\begin{align}\nonumber
\left|  L_{m-1}(z)\right|\leq  & \sum_{i=0}^{m-1} \left|\sum_{j=1}^{m} \varrho_{n+k.j}\;\sigma_{i,j}\right|\; | z|^{m-1-i}\leq\frac{m^2\;(\mathfrak{m}+2)\;\mu_{\rho,n}([-1,1])}{ \mathfrak{m}}  \sum_{i=0}^{m-1}  | z|^{m-1-i} , \\ \nonumber
  \leq & \frac{\mathfrak{m}+2}{\mathfrak{m}}\;m^3\;\max\{\mathfrak{M}^{m-1},1\} \;\mu_{\rho,n}([-1,1])\leq \frac{\mathfrak{m}+2}{\mathfrak{m}}\;m^3\;\max\{\mathfrak{M}^{m-1},1\} \;\mathfrak{M}_{\rho}= \mathfrak{M}_1.\\ \label{Lem-Acota-lambda-5}
 \left| G_{2}(z)\right|=&  \frac{\left|  L_{m-1}(z)\right|}{\prod_{j=1}^{m}  \left| z-\xi_{j}\right|} \leq \frac{\mathfrak{M}_1}{\mathfrak{m}^m}.
\end{align}
Finally, \eqref{Lem-Acota-lambda-3} and \eqref{Lem-Acota-lambda-5} establish the  second assertion.
\end{proof}

\section{Proof of Theorem \ref{Ext_Markov_Th}}\label{Sect-ProofTh2}

Denote $ R_n^{[k]}=\frac{S^{[k]}_{n}(z)}{S_{n+k}(z)}$ and let $\widehat{\mu}_{k}(z)= \int_{-1}^{1} \frac{Q_{k-1}(x)}{z-x} d\mu_{\rho}(x)$ be the  $k$th Markov-type function  associated to $\mu_{\rho}$ ($k\in \NN$) as in \eqref{Th.Markov}. Note that  ${\widehat{\mu}_{k}}(z)$ is well defined and  holomorphic in $\Omega_{\infty}$ ( ${\widehat{\mu}_{k}}\in \HH(\Omega_{\infty})$ for short)  and $\widehat{\mu}_{k}(\infty)=0$.

For the remainder $\left( \widehat{\mu}_{k}(z) - R^{[k]}_n(z)\right)$, the following formulas take place.

\begin{lemma} \label{LemmRepIntegral} Let $\mu$ be a positive Borel measure supported on $[-1,1]$ and $S_n(z)$ and $S^{[k]}_{n}(z)$ defined as above. Then,
\begin{align}\label{LemmRepIntegral-3}
\widehat{\mu}_{k,n}(z) - R^{[k]}_{n,1}(z)=S^+_{n+k,2}(z) \left(\widehat{\mu}_{k}(z) - R^{[k]}_n(z)\right)={\mathcal{O}}\left( \frac{1}{z^{2(n+1)+k-d-N}}\right),
\end{align}
where $\dsty  \widehat{\mu}_{k,n}(z) =  \int_{-1}^{1} \frac{ Q_{k-1}(x)}{z-x} d\mu_{\rho,n}(x).$
 \end{lemma}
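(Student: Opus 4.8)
The plan is to establish the two assertions of \eqref{LemmRepIntegral-3} separately: first the algebraic identity
$$\widehat{\mu}_{k,n}(z) - R^{[k]}_{n,1}(z)=S^+_{n+k,2}(z)\left(\widehat{\mu}_{k}(z) - R^{[k]}_n(z)\right),$$
and then the order estimate at infinity. For the identity, I would start from the definitions. Recall $S_{n+k}=S_{n+k,1}S_{n+k,2}$ and $S^+_{n+k,2}=(-1)^{\nu}S_{n+k,2}$, and that $d\mu_{\rho,n}(x)=S^+_{n+k,2}(x)\rho(x)\,d\mu(x)=S^+_{n+k,2}(x)\,d\mu_{\rho}(x)$. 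The key observation is that the remainders $\widehat{\mu}_{k}-R^{[k]}_n$ and $\widehat{\mu}_{k,n}-R^{[k]}_{n,1}$ both admit a closed integral form. Using the standard manipulation
$$\widehat{\mu}_k(z)-R^{[k]}_n(z)=\int_{-1}^{1}\frac{Q_{k-1}(x)}{z-x}d\mu_{\rho}(x)-\frac{1}{S_{n+k}(z)}\int_{-1}^{1}\frac{S_{n+k}(z)-S_{n+k}(x)}{z-x}Q_{k-1}(x)\,d\mu_{\rho}(x),$$
the two integrals combine, since the $S_{n+k}(z)$ piece cancels, into
$$\widehat{\mu}_k(z)-R^{[k]}_n(z)=\frac{1}{S_{n+k}(z)}\int_{-1}^{1}\frac{S_{n+k}(x)}{z-x}Q_{k-1}(x)\,d\mu_{\rho}(x).$$
An entirely parallel computation with $S_{n+k,1}$ and $d\mu_{\rho,n}$ in place of $S_{n+k}$ and $d\mu_{\rho}$ yields
$$\widehat{\mu}_{k,n}(z)-R^{[k]}_{n,1}(z)=\frac{1}{S_{n+k,1}(z)}\int_{-1}^{1}\frac{S_{n+k,1}(x)}{z-x}Q_{k-1}(x)\,d\mu_{\rho,n}(x).$$

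The identity then follows by substituting $d\mu_{\rho,n}=S^+_{n+k,2}\,d\mu_{\rho}$ and $S^+_{n+k,2}(x)S_{n+k,1}(x)=(-1)^{\nu}S_{n+k}(x)$ into the numerator integrand of the second formula, and observing that $S^+_{n+k,2}(z)/S_{n+k,1}(z)=(-1)^{\nu}S_{n+k,2}(z)/S_{n+k,1}(z)\cdot S_{n+k,1}(z)/S_{n+k,1}(z)$; more carefully, one writes $1/S_{n+k,1}(z)=S^+_{n+k,2}(z)\cdot(-1)^{\nu}/S_{n+k}(z)$ so that the factor $S^+_{n+k,2}(z)$ is pulled out front and the remaining integral matches exactly the integral representation for $\widehat{\mu}_k-R^{[k]}_n$. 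I expect this bookkeeping with the sign $(-1)^{\nu}$ and the pairing $S_{n+k,1}S^+_{n+k,2}=(-1)^{\nu}S_{n+k}$ to be the step that needs the most care, though it is purely algebraic.

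For the order estimate, I would work from the integral representation
$$\widehat{\mu}_{k,n}(z)-R^{[k]}_{n,1}(z)=\frac{1}{S_{n+k,1}(z)}\int_{-1}^{1}\frac{S_{n+k,1}(x)Q_{k-1}(x)}{z-x}\,d\mu_{\rho,n}(x),$$
and expand $1/(z-x)$ as a geometric series in $x/z$ for large $|z|$. The quasi-orthogonality of $S_{n+k,1}$ against $d\mu_{\rho,n}$, which is the content of the Gauss--Jacobi-type relation in Lemma~\ref{Th-GaussJacobiType-QF} (equivalently, $S_{n+k}$ is quasi-orthogonal of order $d$ with respect to $d\mu_{\rho}$ via \eqref{quasi-orthogonal}), forces the low-order moments $\int_{-1}^{1}S_{n+k,1}(x)Q_{k-1}(x)x^{m}\,d\mu_{\rho,n}(x)$ to vanish: the integrand $S_{n+k,1}x^mQ_{k-1}$ can be tested against the orthogonality relations as long as its degree against the quasi-orthogonality order is controlled. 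Counting degrees, $S_{n+k,1}Q_{k-1}x^m$ has degree $(n+k-N)+(k-1)+m$, and the quadrature \eqref{GaussJacobiType-QF} annihilates the corresponding integral for $m$ up to roughly $2(n+k)-d-N-1-((n+k-N)+(k-1))=n-d+k$, so the first nonvanishing term in the series occurs at order $z^{-(2(n+1)+k-d-N)}$ after dividing by $S_{n+k,1}(z)\sim z^{n+k-N}$. The main obstacle will be getting the exponent bookkeeping exactly right—tracking the degree of $S_{n+k,1}$ (which is $n+k-N$, not $n+k$), the degree of $Q_{k-1}$, the quasi-orthogonality order $d$, and the power of $z$ contributed by the leading term of $S_{n+k,1}$ in the denominator—so that the vanishing moments combine to give precisely the stated decay $\mathcal{O}(z^{-(2(n+1)+k-d-N)})$.
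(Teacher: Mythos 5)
Your argument is correct and follows essentially the same route as the paper: the same integral representation for $\widehat{\mu}_{k}-R^{[k]}_{n}$, the factorization $S_{n+k}=S_{n+k,1}S_{n+k,2}$ (with the sign $(-1)^{\nu}$) to relate the two remainders, and orthogonality of $S_{n+k}$ against $\PP_{n+k-d-1}\,d\mu_{\rho}$ for the decay rate — the paper packages the latter by inserting the factor $S_{n-d+1}(x)/S_{n-d+1}(z)$ instead of expanding $1/(z-x)$ in a geometric series, but the degree count is identical. One small slip: the moments $\int_{-1}^{1}S_{n+k,1}(x)Q_{k-1}(x)x^{m}\,d\mu_{\rho,n}(x)$ vanish for $m\le n-d$, not $m\le n-d+k$; with the corrected bound the first surviving term is $m=n-d+1$, and after dividing by $S_{n+k,1}(z)\sim z^{n+k-N}$ you indeed obtain the stated $\mathcal{O}\left(z^{-(2(n+1)+k-d-N)}\right)$, so the conclusion is unaffected.
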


\begin{proof}
 From the definition of $S^{[k]}_{n}$, we get
\begin{align*}
  S^{[k]}_{n}(z) =&  \int_{-1}^{1} \frac{S_{n+k}(z)-S_{n+k}(x)}{z-x} Q_{k-1}(x) d\mu_{\rho}(x)\\
  = & S_{n+k}(z)  \int_{-1}^{1} \frac{Q_{k-1}(x)}{z-x} d\mu_{\rho}(x)-
  \int_{-1}^{1}\frac{S_{n+k}(x)\; Q_{k-1}(x)}{z-x} d\mu_{\rho}(x)\\
 =& S_{n+k}(z)\widehat{\mu}_{k}(z)-    \int_{-1}^{1}\frac{S_{n+k}(x)\; Q_{k-1}(x)}{z-x} d\mu_{\rho}(x).
\end{align*}
Then, we have
\begin{align}\label{LemmRepIntegral-1}
\widehat{\mu}_{k}(z) - R^{[k]}_n(z)= \int_{-1}^{1}\frac{S_{n+k}(x)\; Q_{k-1}(x)}{S_{n+k}(z)\;(z-x)} d\mu_{\rho}(x).
\end{align}
On the other hand, from the orthogonality condition \eqref{Sobolev-Orth}
\begin{align*}
  0= & \left\langle S_{n+k}(x), \;\frac{(S_{n-d+1}(z) -S_{n-d+1}(x))\; Q_{k-1}(x)\rho(x)}{z-x}\right\rangle \\
  = & \int_{-1}^{1}  S_{n+k}(x) \;\frac{S_{n-d+1}(z) -S_{n-d+1}(x)}{z-x} \, \; Q_{k-1}(x)d\mu_{\rho}(x).
\end{align*}
Hence, it follows that
\begin{align*}
      \int_{-1}^{1}   \frac{S_{n+k}(x)S_{n-d+1}(z) }{z-x} \; Q_{k-1}(x)\, d\mu_{\rho}(x) =&     \int_{-1}^{1}   \frac{S_{n+k}(x) S_{n-d+1}(x)}{z-x} \; Q_{k-1}(x)\,    d\mu_{\rho}(x),
\end{align*}
and from \eqref{LemmRepIntegral-1}, we obtain
\begin{align*}
\widehat{\mu}_{k}(z) - R^{[k]}_n(z)= & \int_{-1}^{1}   \frac{S_{n+k}(x)}{S_{n+k}(z)}\frac{Q_{k-1}{(x)}}{z-x} \; d\mu_{\rho}(x) = \int_{-1}^{1}   \frac{S_{n+k}(x) S_{n-d+1}(x)}{S_{n+k}(z)S_{n-d+1}(z)}\frac{Q_{k-1}{(x)}}{z-x} \;  d\mu_{\rho}(x)\\
 = & {\mathcal{O}}\left( \frac{1}{z^{2(n+1)+k-d}}\right)
\end{align*}
The second equality in \eqref{LemmRepIntegral-3} is a direct consequence of the above equality. Lastly, we compute
\begin{align}\nonumber
 S^+_{n+k,2}(z)\widehat{\mu}_{k}(z) = &   \int_{-1}^{1} \frac{S^+_{n+k,2}(z)\; Q_{k-1}(x)}{z-x} d\mu_{\rho}(x)\\ \label{LemmRepIntegral-31}
 =  & \int_{-1}^{1} \frac{S^+_{n+k,2}(z)-S^+_{n+k,2}(x)}{z-x} \; Q_{k-1}(x) d\mu_{\rho}(x) + \widehat{\mu}_{k,n}(z),
\end{align}
and
\begin{align}  \label{LemmRepIntegral-32}
\begin{split}
 S^+_{n+k,2}(z) R^{[k]}_n(z) = &   \frac{{(-1)^{\nu}}S^{[k]}_{n}(z)}{S_{n+k,1}(z)} = \frac{{(-1)^{\nu}}}{S_{n+k,1}(z)} \int_{-1}^{1} \frac{S_{n+k}(z)-S_{n+k}(x)}{z-x}\; Q_{k-1}(x)  \;d\mu_{\rho}(x),\\
= &\int_{-1}^{1} \frac{S_{n+k,1}(z)S^+_{n+k,2}(z)-S_{n+k,1}(x)S^+_{n+k,2}(x)}{S_{n+k,1}(z) (z-x)}\; Q_{k-1}(x)  \;d\mu_{\rho}(x),\\
= &R^{[k]}_{n,1}(z) +\int_{-1}^{1} \frac{S^+_{n+k,2}(z)-S^+_{n+k,2}(x)}{z-x}\; Q_{k-1}(x)  \;d\mu_{\rho}(x).
\end{split}
\end{align}
The first equality now follows by subtracting \eqref{LemmRepIntegral-32} from \eqref{LemmRepIntegral-31}.
\end{proof}

\begin{proof}[Proof of Theorem \ref{Ext_Markov_Th}] \
Let $K$ be any compact set on $\Omega_{\infty}$ and consider the level curve $\ell_{\tau}$ defined by
$$\ell_{\tau}=\{z\in \CC : |\varphi(z)|=\tau \}, \quad \text{ where } \tau>1 \text{ and $\varphi$ as in \eqref{AympRatioStandard}.}$$
Since $\varphi(K)$ is a compact set, we can take $\tau$ sufficiently close to $1$ such that $1<\tau<\min|\varphi(K)|$  (remember that $\varphi$ is the conformal map of  the exterior of $[-1,1]$ onto the exterior of the unit circle).
{From Lemma \ref{Lem-Acota-lambda} and \eqref{MesureBounded}, the sequences  $\{ \widehat{\mu}_{k,n}\}$ and $\{R^{[k]}_{n,1}\}$  are uniformly bounded over $\ell_{\tau}$}. Then, there exists a constant $\mathfrak{L}_{\tau}$, independent of $n$, such that  for all  $z \in \ell_{\tau}$
\begin{equation}\label{Ext_Markov_Th-1}
  \left|\left( \widehat{\mu}_{k,n}(z) - R^{[k]}_{n,1}(z)\right)\varphi^{2(n+1)+k-d-N}(z)\right|\leq \mathfrak{L}_{\tau} \; \tau^{2(n+1)+k-d-N}.
\end{equation}
Taking  into account that $\varphi$  has a simple pole at $\infty$, from  \eqref{LemmRepIntegral-3}, we have
$$\left(\left( \widehat{\mu}_{k,n}- R^{[k]}_{n,1}\right)\varphi^{2(n+1)+k-d-N}\right) \in \HH(\Omega_{\infty}).$$
Now,  from the maximum modulus principle  the bound \eqref{Ext_Markov_Th-1} also holds on $K$. Consequently, we have
$$
\left| \widehat{\mu}_{k,n}(z) - R^{[k]}_{n,1}(z) \right|\leq \mathfrak{L}_{\tau} \; \left(\frac{\tau}{\left|\varphi(z) \right|}\right)^{2(n+1)+k-d-N}  \quad z \in K.
$$ {Hence}
\begin{equation}\label{Ext_Markov_Th-2}
  \sup_{z\in K}{\left|\widehat{\mu}_{k,n}(z) - R^{[k]}_{n,1}(z)\right|} \leq \mathfrak{L}_{\tau} \; \left(\frac{\tau}{{\min|\varphi(K)|}}\right)^{2(n+1)+k-d-N}  \limn  0,
\end{equation}
which {is equivalent} to say that $\dsty  R^{[k]}_{n,1}(z) \unifn \widehat{\mu}_{k,n}(z) \quad K \subset \Omega_{\infty}.$

As before, ${\Omega_{\infty}^*=\Omega_{\infty}} \setminus \{c_1,c_2,\dots, c_N\}$. For the rest of the proof we  assume that the compact set $K$ is a subset of $\Omega_{\infty}^*$.  From Lemma \ref{AsymCBounded_Lemma}, there exists a constant $\mathfrak{L}_2>0$, independent of $n$, such that $\dsty \mathfrak{L}_2  \leq  |S_{n+k,2}(z)| $ for all $z \in K$. Therefore, taking into account \eqref{LemmRepIntegral-3}, we get
\begin{equation}\label{Ext_Markov_Th-3}
 \sup_{z\in K}{\left|\widehat{\mu}_{k}(z) - R^{[k]}_{n}(z)\right|} \leq \frac{\mathfrak{L}_{\tau}}{\mathfrak{L}_2   } \; \left(\frac{\tau}{{\min|\varphi(K)|}}\right)^{2(n+1)+k-d-N}  \limn 0,
\end{equation}
\end{proof}

As a complement of  Theorem \ref{Ext_Markov_Th}, we have the following estimate for the degree of convergence (``speed'') of  $R^{[k]}_n$ to $\widehat{\mu}_{k}$.

\begin{corollary} \label{CoroSpeed}Under the same hypotheses of Theorem \ref{Ext_Markov_Th}, we have
\begin{equation}\label{CoroSpeed-1}
  \limsup_n \left\|\widehat{\mu}_{k} - R^{[k]}_n\right\|_K^{1/2n}\leq \|\varphi^{-1}\|_K<1.
\end{equation}
\end{corollary}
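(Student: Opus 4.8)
The plan is to extract the convergence rate directly from the explicit bound \eqref{Ext_Markov_Th-3} already established in the proof of Theorem \ref{Ext_Markov_Th}. That inequality gives, for any compact $K\subset\Omega_\infty^*$,
$$
\left\|\widehat{\mu}_{k} - R^{[k]}_{n}\right\|_K \leq \frac{\mathfrak{L}_{\tau}}{\mathfrak{L}_2} \left(\frac{\tau}{\min|\varphi(K)|}\right)^{2(n+1)+k-d-N},
$$
where $1<\tau<\min|\varphi(K)|$ is arbitrary, and $\mathfrak{L}_\tau,\mathfrak{L}_2$ are constants independent of $n$. The whole point is that the exponent grows linearly in $n$ with slope $2$, so raising to the power $1/2n$ and taking $\limsup_n$ will leave only the geometric base $\tau/\min|\varphi(K)|$.

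First I would raise both sides to the power $1/2n$. Since $\mathfrak{L}_\tau/\mathfrak{L}_2$ is a fixed positive constant, $(\mathfrak{L}_\tau/\mathfrak{L}_2)^{1/2n}\to 1$ as $n\to\infty$. The exponent on the geometric factor is $2(n+1)+k-d-N$, so dividing by $2n$ gives $\frac{2(n+1)+k-d-N}{2n}\to 1$. Hence
$$
\limsup_n \left\|\widehat{\mu}_{k} - R^{[k]}_{n}\right\|_K^{1/2n} \leq \frac{\tau}{\min|\varphi(K)|}.
$$
The key observation is that $\tau$ was free to be chosen in the interval $(1,\min|\varphi(K)|)$; since the left-hand side does not depend on $\tau$, I would let $\tau\to 1^{+}$, obtaining
$$
\limsup_n \left\|\widehat{\mu}_{k} - R^{[k]}_{n}\right\|_K^{1/2n} \leq \frac{1}{\min|\varphi(K)|}.
$$

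Finally I would identify the right-hand side with $\|\varphi^{-1}\|_K$. Since $\min_{z\in K}|\varphi(z)| = \left(\max_{z\in K}|\varphi(z)|^{-1}\right)^{-1}$ and $\varphi$ has no zeros on $\Omega_\infty$, we have $\dfrac{1}{\min|\varphi(K)|} = \max_{z\in K}|\varphi(z)|^{-1} = \|\varphi^{-1}\|_K$, which yields \eqref{CoroSpeed-1}. The strict inequality $\|\varphi^{-1}\|_K<1$ follows because $K\subset\Omega_\infty^*\subset\Omega_\infty$ is a compact set on which $|\varphi(z)|>1$ everywhere, so $\min_{z\in K}|\varphi(z)|>1$ by compactness and continuity of $\varphi$.

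I do not expect any genuine obstacle here: the corollary is essentially a repackaging of the root-test reading of an already-proven geometric decay estimate. The only point requiring a little care is the order of the two limiting operations — one must take $\limsup_n$ first (which eliminates the constant prefactor and the lower-order terms in the exponent) and only afterward send $\tau\to 1^{+}$, exploiting that the quantity being bounded is independent of $\tau$. A secondary, purely bookkeeping matter is confirming that $\min|\varphi(K)|>1$ strictly; this is immediate from compactness of $K$ together with $|\varphi|>1$ on $\Omega_\infty$, which is how $\varphi$ was introduced in \eqref{AympRatioStandard}.
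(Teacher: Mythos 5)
Your proposal is correct and follows essentially the same route as the paper: take the $2n$th root of the bound \eqref{Ext_Markov_Th-3}, let $n\to\infty$ so that the constant prefactor and the lower-order terms in the exponent disappear, and then use the freedom in $\tau\in(1,\min|\varphi(K)|)$ to conclude $\limsup_n\|\widehat{\mu}_{k}-R^{[k]}_n\|_K^{1/2n}\leq 1/\min_{z\in K}|\varphi(z)|=\|\varphi^{-1}\|_K<1$. You are in fact slightly more explicit than the paper about the final step of sending $\tau\to 1^{+}$ after the $\limsup$, which the paper leaves implicit.
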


\begin{proof} Taking the $2n$th root in \eqref{Ext_Markov_Th-3}, we get
\begin{equation}\label{CoroSpeed-2} \left\|\widehat{\mu}_{k} - R^{[k]}_n\right\|_K^{1/2n}\leq  \left(\frac{\mathfrak{L}_{\tau}}{\mathfrak{L}_2   }\right)^{1/2n} \; \left(\frac{\tau}{{\min|\varphi(K)|}}\right)^{(2(n+1)+k-d-N)/(2n)}.\end{equation}
Since  ${\tau<\min|\varphi(K)|}$,  \eqref{CoroSpeed-1} follows from \eqref{CoroSpeed-2}.
\end{proof}


\end{document}